\setlist[itemize]{noitemsep, topsep=0pt}
\setlist[enumerate]{noitemsep, topsep=0pt}
\setlist[itemize]{leftmargin=*}
\setlist[enumerate]{leftmargin=*}
\providecommand{\U}[1]{\protect\rule{.1in}{.1in}}
\providecommand{\norm}[1]{\left\lVert#1\right\rVert}
\providecommand{\abs}[1]{\left\lvert#1\right\rvert}
\providecommand{\pr}[1]{\left(#1\right)} 
\providecommand{\pp}[1]{\left[#1\right]} 
\providecommand{\set}[1]{\left\lbrace#1\right\rbrace} 
\providecommand{\scal}[1]{\left\langle#1\right\rangle}
\newcommand{\Ho}[1]{H_0^1\pr{\mathcal{O}_{#1}}}
\newcommand{\Lo}[1]{\mathbb{L}^{#1}\pr{{\mathcal{O}}}}
\newcommand{\Hmo}{\pr{H_0^1}^{*}\pr{\mathcal{O}}}
\newcommand{\normi}[3]{\norm{#1}_{
    \ifthenelse{\equal{#2}{1}}{H_0^1\pr{\mathcal{O}_{#3}}}{%
    \ifthenelse{\equal{#2}{-1}}{H^{-1}\pr{\mathcal{O}_{#3}}}{}}}}
\newcommand{\subjclass}[2][2020]{%
  \let\@oldtitle\@title%
  \gdef\@title{\@oldtitle\footnotetext{\textbf{#1 \emph{Mathematics subject classification.}} #2}}%
}
\newcommand{\keywords}[1]{%
  \let\@@oldtitle\@title%
  \gdef\@title{\@@oldtitle\footnotetext{\textbf{\emph{Key words.}} #1.}}%
}
\providecommand{\C}{\mathbb{C}}
\newtheorem{theorem}{Theorem}
\newtheorem{assumption}[theorem]{Assumption}
\newtheorem{definition}[theorem]{Definition}
\newtheorem{example}[theorem]{Example}
\newtheorem{problem}[theorem]{Problem}
\newtheorem{proposition}[theorem]{Proposition}
\newtheorem{remark}[theorem]{Remark}
\newenvironment{proof}[1][Proof]{\noindent\textbf{#1.} }{\ \rule{0.5em}{0.5em}}
\author[1,2]{Ioana Ciotir}
\author[3,1,4]{Dan Goreac}
\author[1,5]{Juan Li}
\author[1]{Xinru Zhang}
\affil[1]{School of Mathematics and Statistics, Shandong University, Weihai, Weihai 264209, P.R. China, xinruzhang@mail.sdu.edu.cn}
\affil[2]{Normandie University, INSA de Rouen Normandie, LMI (EA 3226 - FR CNRS 3335), 76000 Rouen, France, \textit{Email: ioana.ciotir@insa-rouen.fr}}
\affil[3]{\'{E}cole d'Actuariat, Universit\'{e} Laval,  Québec (QC), Q1V 0A6, Canada, dan.goreac@act.ulaval.ca}
\affil[4]{LAMA, Univ Gustave Eiffel, UPEM, Univ Paris Est Creteil, CNRS, F-77447 Marne-la-Vallée, France, \textit{Email: dan.goreac@univ-eiffel.fr}}
\affil[5]{Research Center for Mathematics and Interdisciplinary Sciences, Shandong University, Qingdao 266237, P. R. China, \textit{Email: juanli@sdu.edu.cn}}
\begin{document}
\title{A Stochastic Porous Media Schrödinger Equation: Feynman-type Motivation, Well-Posedness and Control Interpretation}
\maketitle

\abstract{This paper's aim is threefold. First, using Feynman's path approach to the derivation of the classical Schrödinger's equation in \cite{F_1948} and by introducing a slight path (or wave) dependency of the action, we derive a new class of equations of Schrödinger type where the driving operator is no longer the Laplace one but rather of complex porous media-type. Second, using suitable concepts of monotonicity in the complex setting and on appropriate functional spaces, we show the existence and uniqueness of the solution to this type of equation. In the formulation of our equation, we adjoin possible measurement absolute errors translating in an additive Brownian perturbation and interactions between different waves translating in a mean-field (or McKean-Vlasov) dependency of drift coefficient. Finally, using Fitzpatrick’s characterization of maximal monotone operators (cf. \cite{Fitz_1988}), we propose a Brézis-Ekeland type characterization of the solution of the deterministic equation via a control problem. This is envisaged as a possible way to overcome strict monotonicity requirements in the complex setting.}\\

\noindent{\textbf{Keywords}: equations of Schrödinger type,  Feynman's approach,  complex stochastic porous media equation,  Fitzpatrick’s characterization of complex maximal monotone operators,  control problem}\\

\noindent{MSC2020: Primary 
35J10, 
81-10, 
76S99
; secondary 
47H05 
49J20
\\
Classification: 40.09 Mathematical physics, 40.12 Non-linear PDE, 60.07 Quantum dynamics
}

\section{Introduction}

\noindent The mathematical developments on quantum (mechanics) systems are usually of differential nature in the spirit of Schrödinger's contributions or of algebraic nature in the spirit of Heisenberg's theory. A somewhat different approach based on Hamilton's first principle was proposed in the paper \cite{F_1948} leading to what is sometimes referred to as \emph{Feynman's formula}. The fundamental postulate in the cited reference \cite{F_1948} is the fact that, in the computation of amplitudes, the phase can be taken proportional to the \emph{"time integral of the Lagrangian along a path"} or\emph{"action"} in the nomenclature adopted. The (realization of the) path is then intended in Euclidean sense as a succession of points $\pp{x_j:=x(t_j)}_{-\infty<j<\infty}$ contributing with terms $S(x_j,x_{j+1})$ in a stationary way. In our developments, we are going to consider a simple form of Lagrangian linear in velocity ($p$) with a potential-related coefficient ($L(p,x):=\overset{\cdot }{\beta }(x)p$) but, foremost, we ask that the contribution be computed not on a straight line but on the actual wave $\psi(x,t)$, thus leading to terms like $S\pr{\psi(x_j,t_j),\psi(x_{j+1},t_{j+1}}\approx S\pr{\psi(x_j,t_j),\psi(x_{j+1},t_{j}}$. We wish to point out that, from our point of view, the underlying discrete trajectory is $\pp{(t_j,x_j:=x(t_j))}_{-\infty<j<\infty}$ with an explicit presence of the time. Furthermore, as already pointed out, $S$ is computed along the wave traveled.\\

\noindent With these considerations in mind, we derive, in Section \ref{Sec2} a Schrödinger-like PDE, albeit the fact that the governing operator is no longer the Laplace one but the $\beta$-induced porous-media nonlinear operator $\Delta \beta$, with $\beta$ being a (possibly) complex valued, complex argument function. To our best knowledge, such equations have not been previously studied and we believe that the aforementioned arguments using Feynman's approach should convince our readers of the interest of such PDEs. Let us also point out that at this level, the considerations are somewhat of an axiomatic nature and $S$ is not a priori consistent, as it depends on the wave function $\psi$. The rigorous study of $S$ once $\psi$ is obtained is left for a future work. In the case of Schrödinger's classical operator, we mention the approach in \cite[Section 5.5]{Vrabie_C0} via $C_0$-semigroups. Of course, such arguments for complex porous media operators need a careful study and, as specified, this exceeds the aim we have for this short paper.\\

\noindent In Section \ref{Sec3}, we shall adapt some monotonicity tools which
are classical in $\mathbb{R}$, to our case which needs similar results in $%
\mathbb{C}$.  The notion of monotonicity (maximal or $m$-monotonic, $m$-accretive, etc.) is not new and goes back to \cite{Kato_1967} in its complex formulation, but it is equally present, with some variations, in the original papers \cite{Minty_1962} or \cite{Browder_1968}. Furthermore, monotonic methods have been employed to treat variants of Schrödinger's equation, for instance in \cite{OY_2002} or \cite{OY_2002_Jap} in which Ginzburg-Landau and related equations are dealt with. \\

In our setting, and much like for usual porous media equations in the real setting, one needs to carefully describe the spaces on which monotonicity for the complex porous media operator can be envisaged and this constitutes the core of Section \ref{Sec3}. The choice of spaces, in the spirit of \cite{PR_2007}, is made in order to facilitate the analysis of stochastic PDEs of Schrödinger-porous media. Besides the functional definitions, we link, in Proposition \ref{PropMaxmonotone} the strict monotonicity of $\beta$ as a complex function to the maximal monotonicity of the induced porous-media operator. The equation we consider has stochastic features through a Brownian-drive additive-type noise and the coefficients exhibit mean-field dependency. \begin{equation*} 
\begin{cases}
&dX(t)=\pr{A\pr{X(t)}+f\pr{t,X(t),\mathbb{P}_{X(t)}}}\ dt+g(t) dW(t),\ t\in \pr{0,T},\\
&X(0)=X_0,
\end{cases}
\end{equation*}where $A$ stands for the complex porous media operator, $f$ is the drift depending on the law of the solution $\mathbb{P}_{X(t)}$ and $g$ is the (additive) noise coefficient. We discuss the regularity assumptions on $f$ in a functional setting and provide examples of cylindrical constructions for such coefficients (see Section \ref{Sec4.2}).
Using these tools, combined with a Galerkin-type approach, we prove the consistency of our equations for strict monotonic $\beta$. This constitutes the aim of Section \ref{Sec4.3}. 
The presence of a further Lipschitz non-linearity $f$ is intended for future design of stabilizing controls, either in the area of controllability in the spirit of \cite{GM_2021}, or for state-constrained design, see \cite{CGM_2023_JEEQ}. The classical results are not directly applicable and we offer a detailed treatment of the existence.\\

\noindent In the last section (Section 5) we offer a different interpretation of the solution for the Schr\"{o}%
dinger-type porous-media equation through an optimal control problem inspired by the
variational principle in \cite{BrezisEkeland_1976} or again \cite{N_1976}. While in the real case the arguments are connected to Fenchel duality and sub-differential expressions of monotone operators, the complex setting requires to employ variations of Fitzpatrick's characterization of maximal montonicity, cf. \cite{Fitz_1988}. We offer a detailed treatment of the Brézis-Ekeland-like variational characterization for strictly monotonic $\beta$ leading to Proposition \ref{PropBE}. A different characterization based on the induced porous media operator $-\Delta\beta$ being also maximal monotonic is hinted at in Remark \ref{RemBE}. \\

\noindent To summarize, the main contributions of the paper rely in 
\begin{itemize}
\item the derivation of a new equation of Schrödinger type driven by porous media complex operators through Feynman-type arguments closely related to the quantum mechanics. We believe this gives a physical and philosophical reason for the study of such equations;
\item the complete study of such equations with Brownian additive perturbation motivated by measurement of errors and with law-depending non-linearities to emphasize possible entanglement of quantum wave instances;
\item the variational (or control) interpretation of the solution using Fitzpatrick-type representation of monotone operators. We believe that the independent study of such problems should allow a stand-alone method to prove the existence of solutions;
\end{itemize}

\section{Heuristics on the Model Following Feynman's Approach}\label{Sec2}

\noindent \textbf{The Lagrangian}

To understand the developments hereafter, we begin with recalling some elements constituting Feynman's path approach in \cite{F_1948} as an alternative to Heisenberg-Dirac, see \cite{Dirac_1930} or Schrödinger's approach \cite{Schrodinger_1926}. The approach can be split into two \emph{postulates}.
\begin{enumerate}
\item The most important is the fact that \emph{"the paths contribute equally in magnitude, 
but the phase of their contribution is the classical 
action (in units of $\hbar$)"}. This leads, for Feynman, to contributions of a path $\pp{x(t)}_{t\in \mathbb{R}}$ of type
$\exp\pr{\frac{i}{\hbar}S\pp{x(t)}}$. At the same time, the action is to be linked to a Lagrangian depending on the speed
and on the position of the point $L$ and, as such, the action is determined by $S\pp{x(t)}:=\int L(\overset{\cdot}{x}(t),x(t))dt$.
\item The remaining postulate requires, as always in quantum analysis, the "superposition of probability amplitudes". Roughly speaking, given measurements $a$, resp. $c$ of events $A$ and $C$, one sums, over intermediate mutually excluding events to get  
$\phi_{ac}=\sum_{b}\phi_{ab}\phi_{bc}$. When combined with discretized trajectory $\pp{x_j:=x(t_j)}_{j\in\mathbb{Z}}$, this leads (see \cite[Eq. (9)]{F_1948}) to 
$\phi(R)\approx\int_R \exp\pp{\frac{i}{\hbar}\sum_{-\infty<j<\infty}S\pr{x_{j+1},x_j}}\ldots \frac{dx_{j+1}}{A}\frac{dx_{j}}{A}\ldots$, given a region $R$. The parameter $A$ is a normalization one.
\end{enumerate}
As a consequence of these postulates, the \emph{wave function} computed on the $t_k$-non anticipating sub-region $R'$ is given by \[\psi(x_k,t)\approx \int_{R'} \exp\pp{\frac{i}{\hbar}\sum_{-\infty<j<k}S\pr{x_{j+1},x_j}}\ldots \frac{dx_{k-2}}{A}\frac{dx_{k-1}}{A}\frac{1}{A}.\]
Then Schrödinger's original  equation is obtained with the usual Lagrangian in a movement-against-$V$-potential i.e.  $L(\overset{\cdot}{x},x):=\frac{m\pr{\overset{\cdot}{x}}^2}{2}-V(x)$.

\bigskip

Let us give \textbf{another way of interpreting these arguments}.  Let us first fix a Lipschitz-continuous (for now) real function $\beta :\mathbb{%
R\rightarrow R}$ and denote by $\overset{\cdot }{\beta }$ its $L^{\infty
}\left( \mathbb{R}\right) $ almost everywhere derivative.  In our case, we shall consider the $L$ Lagrangian of the
following form 
\begin{equation*}
L^{0}\left( p,x\right) :=\overset{\cdot }{\beta }(x) p.
\end{equation*}

\noindent Following the idea from Feynman \cite{F_1948}, one can describe the path of
a free particle by a straight-line and therefore the energy on $\pp{t_j,t_{j+1}=t_j+\varepsilon}$ is approximated (provided $\overset{\cdot }{\beta }$ is continuous) as 
\begin{eqnarray}
S^{\varepsilon }\left( x_{j+1},x_{j}\right)  &=&\frac{\varepsilon }{2}\left(
L\left( \frac{x_{j+1}-x_{j}}{\varepsilon },x_{j+1}\right) +L\left( \frac{%
x_{j+1}-x_{j}}{\varepsilon },x_{j}\right) \right) \medskip   \label{S0_1} \\
&=&\frac{\overset{\cdot }{\beta } \left( x_{j+1}\right) +\overset{\cdot }{\beta } \left( x_{j}\right) }{2}\left(
x_{j+1}-x_{j}\right) ,  \notag
\end{eqnarray}%
or, in a simpler formulation, as%
\begin{equation}
S^{\varepsilon }\left( x_{j+1},x_{j}\right) =\varepsilon L\left( \frac{%
x_{j+1}-x_{j}}{\varepsilon },\frac{x_{j+1}+x_{j}}{2}\right) =\overset{\cdot }{\beta } \left( 
\frac{x_{j+1}+x_{j}}{2}\right) \left( x_{j+1}-x_{j}\right) .  \label{S0_2}
\end{equation}

\noindent Feynman mentions such forms of the Lagrangian with emphasis on the
difference in scales, see \cite[Page 376]{F_1948}. Particular emphasis
is put on \eqref{S0_2} for the symmetry in the expected Hamiltonian. 

In Feynman's
interpretation, given a path $\left( t_{k},x_{k}\right) _{k\in 
\mathbb{Z}
}$ the transition 
\begin{equation*}
\left( ...,t_{j-1},x_{j-1},t_{j},x_{j}\right) \longrightarrow \left(
...,t_{j-1},x_{j-1},t_{j},x_{j},t_{j+1},x_{j+1}\right) 
\end{equation*}
is Markovian, homogeneous (independent of the number of transitions)
stationary (only depending on the time available $t_{k}-t_{k-1}$ but not on $%
t_{k-1}$) and governed by a non-random $S$ as before, hence leading to%
\begin{eqnarray*}
\psi \left( x_{k},t_{k}\right)  &:&=\int e^{\frac{i}{\hbar}\sum_{j=-\infty
}^{k-1}S_{j+1}\left( t_{j+1},x_{j+1},t_{j},x_{j},...\right) }\frac{dx_{k-1}}{%
A}\frac{dx_{k-2}}{A}...\medskip  \\
&=&\mathbb{E}\left[ e^{\frac{i}{\hbar}\sum_{j=-\infty }^{k-1}S_{j+1}\left(
t_{j+1},X_{j+1},t_{j},X_{j},...\right) }\right] \medskip  \\
&=&\mathbb{E}\left[ \mathbb{E}\left[ e^{\frac{i}{\hbar}\sum_{j=-\infty
}^{k-1}S_{j+1}\left( t_{j+1},X_{j+1},t_{j},X_{j},...\right) }\left\vert 
\mathcal{F}_{k-1}\right. \right] \right] \medskip  \\
&=&\mathbb{E}\left[ \mathbb{E}\left[ e^{\frac{i}{\hbar}{\color{black} S_{k}}\left(
t_{k},x_{k},t_{k-1},X_{k-1},...\right) }\left\vert \mathcal{F}_{k-1}\right. %
\right] \psi \left( X_{k-1},t_{k-1}\right) \right] \medskip  \\
&=&\mathbb{E}\left[ e^{\frac{i}{\hbar}S^{t_{k}-t_{k-1}}\left(
x_{k},X_{k-1}\right) }\psi \left( X_{k-1},t_{k-1}\right) \right] ,
\end{eqnarray*}%
where $A$ is a factor whose value we shall determine later and $\hbar$ is the reduced Planck constant. 

Although obvious, it is maybe worth mentioning that $X_{j}$ have $\frac{%
dx_{j}}{A}$ densities if {\color{black} $j\leq k-1$} while $X_{k}=x_{k}$ is fixed and $%
\mathcal{F}$\ is the naturally induced filtration associated to these random
variables. {\color{black} Furthermore, $S$ can depend on the number of transition, hence the sub-index $j+1$. } We also emphasize that the last equality only holds for the stationary, Markovian case which is considered in the original Feynman computations.

\bigskip 

\noindent \textbf{Complex developments}

\bigskip 

Since the previous developments are done in $\mathbb{R}$, we are trying to
slightly generalize in two directions: by changing the space to $\mathbb{C}$
and by considering another form of path dependence.

The reader is invited to recall that the complex space $\C^n$ has a Hilbert structure with the usual scalar product $\scal{z_1,z_2}=\sum_{1\leq j\leq n}\overline{z_{1,j}}z_{2,j}$ which is (left-)sesquilinear. As usual, $z=\pr{z_j}_{1\leq j\leq n}$ is a column $n$-dimensional vector over $\mathbb{C}$. 

We consider $\beta :=\beta _{1}+i\beta _{2},$ $\beta _{k}:%
\mathbb{R}^{2}\rightarrow \mathbb{R},  \ k=1,2, $ which is (complex-)differentiable. \\
 
\noindent We now turn to a model that assumes perhaps the simplest form of \emph{path dependence} in the formulation \eqref{S0_2}, i.e.
\begin{equation}\label{EqSComplex}
S(z,s,z')=\beta'\pr{\frac{\psi(z',s)+\psi(z,s)}{2}}\pr{\psi(z,s)-\psi(z',s)}.
\end{equation}
Of course, in this case, the derivative is computed in the complex sense and we indicated this by writing $\beta'$. Heuristically speaking,  the reader is invited to notice the following.

\begin{enumerate}
\item The distance is not taken in Euclidean metric between positions $x$ and $y$ but along the wave $\psi(\cdot,s)$ with the time $s$ fixed;
\item Stationarity is no longer enforced as the "starting" time $s$ enters the expression;
\item Since $\psi$ represents a memory of the path (as it is integrated along past events), the computations are, in some sense, path-dependent; This remark also justifies considering, in the equation, nonlinear terms with law-dependency.
\item One is actually looking into a fixed point of \begin{equation}\label{System}
\begin{cases}
S(x,t,y):=\beta'\pr{\frac{\psi(y,t)+\psi(x,t)}{2}}(\psi(x,t)-\psi(y,t));\\
\psi(x_0=x,\ t_0=t):=\lim_{\varepsilon\rightarrow 0}\int e^{\frac{i}{\hbar}\sum_{n\geq 0}S(x_n,t_n,x_{n+1})}\frac{dx_{n+1}}{A}\frac{dx_{n+2}}{A}\cdots;\\
0<\varepsilon:=\sup_{n\in\mathbb{Z}}\pr{t_{n+1}-t_{n}};
\end{cases}
\end{equation}
\item The integrals can be taken along the same points as in the aforementioned formula for $\psi(x_k,t_k)$ but involving copies $\tilde{X}$ independent of $X$. This strengthens the aforementioned intuition that the entanglement may also translate into the presence of mean-field or McKean-Vlasov terms in the resulting equation.
\end{enumerate}
We emphasize that, at this point, we are not going to prove the consistency of \eqref{System} but merely use this in order to deduce the associated \emph{Schrödinger-type equation}.\\
We have \begin{equation}
\psi(x,t+\varepsilon)=\int e^{-\frac{i}{\hbar}\beta'\pr{\frac{\psi(x+\xi,t)+\psi(x,t)}{2}}\pp{\psi(x+\xi,t)-\psi(x,t)}}\psi(x+\xi,t)\frac{d\xi}{A}.
\end{equation}
It is expected that the normalization $A$ is of order of $\varepsilon^{1/2}$ and so is the support of $\xi$ .  For simplicity, this support is taken $\pp{-\sqrt{\varepsilon},\sqrt{\varepsilon}}$ with the obvious (uniform) normalization $A:=2\sqrt{\varepsilon}$. Since we are going to rely on approximations up to order $\varepsilon$, we ignore the terms over $\xi^2$.  Baring this in mind, we have
\begin{equation}\begin{split}
&\psi(x,t+\varepsilon)\\
\simeq\int_{\pp{-\sqrt{\varepsilon},\sqrt{\varepsilon}}} &\exp\pr{\frac{i}{\hbar}\bigg\{-\frac{\beta''(\psi(x,t))}{2}\pp{\partial_x\psi(x,t)\xi}^2-\beta'(\psi(x,t))\pp{\partial_x\psi(x,t)\xi+\frac{\xi^2}{2}{\color{black} \partial_{x}^2}\psi(x,t)}\bigg\}}\\&\times\pp{\psi(x,t)+\partial_x\psi(x,t)\xi+\frac{\xi^2}{2}{\color{black} \partial_{x}^2}\psi(x,t)}\frac{d\xi}{2\sqrt{\varepsilon}}\\
\simeq\int_{\pp{-\sqrt{\varepsilon},\sqrt{\varepsilon}}} &\pr{1-\frac{i}{2\hbar}\Delta\beta(\psi(x,t))\xi^2-\frac{i}{\hbar}\nabla\beta(\psi(x,t))\xi-\frac{1}{2\hbar^2}\pr{\nabla\beta(\psi(x,t))}^2\xi^2}
\\&\times\pp{\psi(x,t)+\partial_x\psi(x,t)\xi+\frac{\xi^2}{2}{\color{black} \partial_{x}^2}\psi(x,t)}\frac{d\xi}{2\sqrt{\varepsilon}}.
\end{split}
\end{equation}
We get
\begin{equation}
\begin{split}
&\varepsilon\partial_t\psi(x,t)\simeq \\&\frac{\varepsilon}{6}\pp{\psi(x,t)\pr{-\frac{i}{\hbar}\Delta\beta(\psi(x,t))-\frac{1}{\hbar^2}\pr{\nabla\beta(\psi(x,t))}^2}+\Delta \psi(x,t)-2\frac{i}{\hbar}\nabla\beta(\psi(x,t))\nabla \psi(x,t)}.
\end{split}
\end{equation}
We now set $\beta(e^\cdot)=\beta^0\pr{\cdot}$ and set $\phi:=\log\psi$ to get 
\begin{equation}\label{EcSchro}
\begin{split}
&\partial_t\phi(x,t)\simeq \\&
\frac{1}{6}\pp{-\frac{i}{\hbar}\Delta\beta^0\pr{\phi(x,t)}-\frac{1}{\hbar^2}\pr{\nabla \beta^0\pr{\phi(x,t)}}^2+\Delta\phi(x,t)+\pr{\nabla\phi(x,t)}^2-2\frac{i}{\hbar}\nabla\beta^0\pr{\phi(x,t)}\nabla\phi(x,t)}\\
=&\frac{1}{6}\pp{\Delta\pr{-\frac{i}{\hbar}\beta^0+Id}\pr{\phi(x,t)}+\pr{\nabla\pr{-\frac{i}{\hbar}\beta^0+Id}\pr{\phi(x,t)}}^2}\\
=&\pp{\Delta\tilde{\beta}\pr{\phi(x,t)}+\gamma\pr{\nabla\tilde{\beta}\pr{\phi(x,t)}}^2},
\end{split}
\end{equation}
where $\tilde{\beta}=\frac{1}{6}\pr{-\frac{i}{\hbar}\beta^0+Id}$ and $\gamma=6> 0$.\\

The fundamental example one has in mind is $\tilde{\beta}(z)=\kappa iz$ and a classical change allows one to obtain Schrödinger's equation from \eqref{EcSchro}. Indeed, in this case, we are dealing with
\[\partial_t\phi(x,t)=\kappa i \Delta \phi(x,t)-\gamma\kappa^2 \pr{\nabla \phi(x,t)}^2,\] and by setting $\Phi:=\frac{e^{i\kappa \gamma \phi}}{\kappa \gamma}$, one computes 
\begin{equation*}
\partial_t\Phi=i\kappa \gamma\Phi \partial_t\phi;\ \nabla \Phi=i\kappa \gamma \Phi \nabla\phi ;\ \Delta \Phi=i\kappa\gamma \Phi\Delta\phi -\kappa^2\gamma^2  \Phi\pr{\nabla\phi}^2,
\end{equation*}which amounts to 
\begin{equation}
\label{EcSchro0}
\partial_t\Phi(x,t)=i\kappa\Delta\Phi=\Delta\tilde{\beta}(\Phi(x,t)),
\end{equation}
which is the classical Schrödinger's equation.\\
\noindent This trick and observation is also used for \cite{Porretta1999},  (see also \cite[Section 4]{PV_2012}) to treat an equation of form \eqref{EcSchro} with the usual Laplace operator instead of the porous media one from 
\begin{equation}\label{EcSchro22}
\begin{cases}
\partial_t\phi(x,t)=\Delta \beta\pr{\phi(x,t)}+f(t),&\textnormal{ on }\mathcal{O}\times\pr{0,T},\\
\phi(0,\cdot)=\phi_0,&\textnormal{ on }\mathcal{O}\times\set{0},\\
\phi=0,&\textnormal{ on }\Gamma:=\partial\mathcal{O}.
\end{cases}
\end{equation}

\section{Monotonicity Approach in $\mathbb{C}$ to the Existence and Uniqueness of Solution}\label{Sec3}

In this section we shall present an adaptation to $\mathbb{C}$ of different notions and results which are necessary to study equation \eqref{EcSchro22} using a monotonicity approach.

\begin{definition}\label{DefMon}
\begin{enumerate}
\item A function $\beta:\mathbb{C}\longrightarrow\mathbb{C}$ is said to be \emph{monotone} if there exists $\alpha\in\mathbb{R}_+$ such that  \[\Re\scal{\beta(z)-\beta(z'),z-z'}_\mathbb{C}\geq \alpha\abs{z-z'}^2,\ \forall z,z'\in\mathbb{C}.\]
When $\alpha>0$, the function is called \emph{strictly monotone}.
\item Given a complex linear vector space $\pr{V,\norm{\cdot}_V}$ whose dual is denoted by $V^*$, the functional $B:D(B)\subset V\longrightarrow V^*$ is said to be \emph{monotone} if there exists $\alpha\in\mathbb{R}_+$ such that  \[\Re\scal{B(h)-B(h'),h-h'}_{V^*,V}\geq \alpha \norm{h-h'}_V^2,\ \forall h,h'\in D(B),\]with $D$ classically denoting the domain of $B$.
\end{enumerate}
\end{definition}
\begin{remark}
Note that if $\beta$ is monotone, then \begin{align*}
\abs{z-z'+a(\beta(z)-\beta(z')}^2& \geq
\abs{z-z'}^2+2a\Re\scal{\beta(z)-\beta(z'),z-z'}\\
&\geq \abs{z-z'}^2+2a\alpha{\color{black}\abs{z-z'}}^2\\
&\geq\abs{z-z'}^2,
\end{align*}for all $a\in\mathbb{R}_+$.  Due to the uniform convexity of $\mathbb{C}$ the two notions coincide in this framework (see \cite[(M) and (M')]{Kato_1967}).
\end{remark}

One can easily see in the following examples a way of understanding the notions above. 

\begin{example}
\begin{enumerate}
\item For every $p\geq 1$, the function $\beta(z):=i\abs{z}^{p-1}z$ is monotone in the sense of the previous definition. Indeed, \begin{align*}
&\Re\pr{\overline{\beta(z_1)-\beta(z_2)}(z_1-z_2)}\\&=\Re\pp{\pr{-i\pr{\abs{z_1}^{p-1}\Re z_1-\abs{z_2}^{p-1}\Re z_2}-\pr{\abs{z_1}^{p-1}\Im z_1-\abs{z_2}^{p-1}\Im z_2}}\pr{z_1-z_2}}\\
&=\pr{\abs{z_1}^{p-1}\Re z_1-\abs{z_2}^{p-1}\Re z_2}\pr{\Im z_1-\Im z_2}-\pr{\abs{z_1}^{p-1}\Im z_1-\abs{z_2}^{p-1}\Im z_2}\pr{\Re z_1-\Re z_2}\\
&=0.
\end{align*}
\item With the same argument, one can actually show that $\beta(z):=\tilde{\beta}(\abs{z})z$ is monotone, for $\tilde{\beta}:\mathbb{R}_+\longrightarrow\mathbb{C}$ such that $\Re\tilde{\beta}$ has non-negative values.  If $\Re\tilde{\beta}$ is bounded from below away from $0$, then the monotonicity is strict (i.e. $\alpha=\inf_{x\in\mathbb{R}_+}\Re\tilde{\beta}(x)>0$).
\end{enumerate}
\end{example}

In the present work we are essentially interested in a complex form of the porous media operator.  For this reason,  in connection to the afore-mentioned functions $\beta$,  we will introduce the \emph{porous media operator} $\Delta\beta$. 

Given an open bounded set $\mathcal{O}\subset \mathbb{R}^n$,  we introduce the following Sobolev spaces which are necessary for the construction of the complex porous media operator:
\begin{itemize}

\item $\mathbb{L}^2=\set{\phi:\mathcal{O}\longrightarrow \mathbb{C}:\ \Re\phi,\Im \phi\in \mathbb{L}^2\pr{\mathcal{O};\mathbb{R}}}$ with the Hermitian inner product
\[\scal{\phi,\psi}_{\mathbb{L}^2}:=\int_\mathcal{O}\overline{\phi(\xi)}\psi(\xi)d\xi,\ \phi,\psi\in\mathbb{L}^2,\]and the induced Euclidean norm.
\item $H_0^1:=\set{\phi:\mathcal{O}\longrightarrow \mathbb{C}:\ \Re\phi,\Im \phi\in H_0^{1,2}\pr{\mathcal{O};\mathbb{R}}}$, for the classical \emph{Sobolev spaces} of order $1$ in $\mathbb
L^2\pr{\mathcal{O};\mathbb{R}}$ where Dirichlet boundary conditions are enforced. The scalar product is the usual one involving $\mathbb{C}$-linear spaces i.e.
\[\scal{\phi,\psi}_{H_0^1}:=\int_\mathcal{O}\scal{\nabla\phi(\xi),\nabla\psi(\xi)}_{\mathbb{C}^n}d\xi,\ \phi,\psi\in H_0^1.\] Furthermore, the induced norm will be denoted by $\norm{\cdot}_{H_0^1}$. 

\end{itemize}

The reader is invited to note that this is possible because $\mathcal{O}$ is assumed to be bounded and due to Poincaré's inequality (otherwise,  $\scal{\phi,\psi}_{\mathbb{L}^2}$ should be added).
One considers $\pr{H_0^1}^*$ to be the dual of $H_0^1$.  The duality (such that $\pr{H_0^1,\mathbb{L}^2,\pr{H_0^1}^*}$ is a \emph{Ghelfand triple} when $\mathbb{L}^2$'s dual is identified with itself via Riesz's principle) yields
\[\scal{\phi,\psi}_{\pr{H_0^1}^*,H_0^1}=\int_{\mathcal{O}}\overline{	\phi(\xi)}\psi(\xi)d\xi.\]

As a consequence,  and by using integration-by-parts arguments, whenever $\phi,\psi\in H_0^1$, one has\[\scal{-\Delta\phi,\psi}_{\pr{H_0^1}^*,H_0^1}=\int_{\mathcal{O}}\scal{\nabla\phi(\xi),\nabla\psi(\xi)}_{\mathbb{C}^n}d\xi=\scal{\phi,\psi}_{H^1_0}.\] As a consequence, $-\Delta:H_0^1\longrightarrow \pr{H_0^1}^*$ provides an isometric isomorphism.  The Hilbert structure on $\pr{H_0^1}^*$ is given by \[\scal{\Delta\phi,\Delta\psi}_{\pr{H_0^1}^*}=\scal{\phi,\psi}_{H_0^1}, \ \forall \phi,\psi\in H_0^1.\]
Now one can identify $\pr{H_0^1}^*$ and $H_0^1$ via $(-\Delta)^{-1}$ and use this identification in order to obtain the Gelfand triple\begin{equation}
\label{Gelfand}
V:=\mathbb{L}^2\subset \pr{H_0^1}^*\subset V^*=(\mathbb{L}^2)^*
\end{equation}
where $(\mathbb{L}^2)^*$ is the dual of $\mathbb{L}^2$ with the $ \pr{H_0^1}^*$ topology.  We have chosen to keep the usual complex inner product in the definitions (in the physical sense) instead of the real one in which a symmetric $\Re\scal{\cdot,\cdot}$ would be considered. This does not impact the induced norm, nor the differential formulae which are still written with $\Re$ appearing.\\

Throughout the paper,  we will keep this notation in order not to perturb our readers when reading $\pr{\mathbb{L}^2}^*$ as defined above.
It is then immediate that $-\Delta$ extends to an isometry from $V$ to $V^*$ such that \begin{equation}
\scal{-\Delta\phi,\psi}_{V^*,V}=\scal{\phi,\psi} _{\mathbb{L}^2}, 
\end{equation}for all $\phi,\psi\in V=\mathbb{L}^2$.
All these arguments follow in the same way as their real-spaces analogous. The interested reader is invited to take a look at \cite[Example 4.1.7, Example 4.1.11, Lemmas 4.1.12; 4.1.13]{PR_2007} for the real-valued settings.

\begin{definition}
For an operator $\beta:\mathbb{C}\longrightarrow\mathbb{C}$ which is monotone and has linear growth i.e. 

\begin{equation}\label{Asslingrowth}\exists c\in\mathbb{R}_+\textnormal{ s.t. }\abs{\beta(z)}\leq c\pr{1+\abs{z}},\ \forall z\in\mathbb{C}.
\end{equation}
we can properly define the {\emph{ complex porous media operator}} by
 \begin{align*}
A:=\Delta\beta:V\longrightarrow V^*, 
A(\phi):=\Delta\beta(\phi),\ \forall \phi\in \mathbb{L}^2.
\end{align*}
\end{definition}

One can check that $-A:=-\Delta\beta$ is monotone on $\pr{H_0^1}^*$.
Indeed,\begin{align*}\Re\pr{ \scal{-A(\phi)+A(\psi),\phi-\psi}_{V^*,V}}&=\Re\pr{ \int_{\mathcal{O}}\scal{\beta(\phi(\xi))-\beta(\psi(\xi)),\phi(\xi)-\psi(\xi)}_{\mathbb{C}}d\xi}\\
&=\int_{\mathcal{O}}\Re \scal{\beta(\phi(\xi))-\beta(\psi(\xi)),\phi(\xi)-\psi(\xi)}_{\mathbb{C}}d\xi\\&\geq \alpha\int_{\mathcal{O}}\abs{\phi(\xi)-\psi(\xi)}^2d\xi=\alpha\norm{\phi-\psi}_{\mathbb{L}^2}^2.\end{align*}
Note that strict monotonicity of $\beta$ implies strict monotonicity of $-A$.

\begin{example}If one assumes the particular form $\beta(z)=\tilde{\beta}(\abs{z})z$ for $\tilde{\beta}$ defined as in the examples above,  then $\beta(\bar{z})=\tilde{\beta}(\abs{z})\bar{z}$ and therefore  \begin{equation}\label{Abar}\overline{A(\phi)}=\overline{\Delta\beta(\phi)}=\Delta\Re \beta(\phi)-i\Delta\Im\beta(\phi)=\Delta\tilde\beta(\abs{\phi})\pr{\Re\phi-i\Im\phi}=A\pr{\bar{\phi}},\end{equation}for all $\phi\in\mathbb{L}^2$.
\end{example}

\begin{remark}\label{RemFullRangeandRiesz}
\begin{enumerate}
\item A complex function $\beta$ can be identified with a function $\begin{pmatrix}
\Re \beta\\\Im\beta
\end{pmatrix}:\mathbb{R}^2\longrightarrow\mathbb{R}^2$ and the monotonicity requirement is just

\begin{equation}
\scal{\begin{pmatrix}
\Re \beta(x_1,x_2)  \\\Im\beta(x_1,x_2)
\end{pmatrix}  - \begin{pmatrix}
\Re \beta(y_1,y_2) \\\Im\beta(y_1,y_2)
\end{pmatrix}   ,\begin{pmatrix}
x_1 - y_1\\x_2 - y_2
\end{pmatrix}}_{\mathbb{R}^2}\geq 0
\end{equation}

thus reducing to the monotonicity on $\mathbb{R}^2$. It is for this reason that the functional arguments (on monotonicity, for instance) can be conducted on real Hilbert spaces. \\
For instance,  if $\beta$ is continuous and strictly monotone, then it has full range $\mathbb{C}$. Indeed, using the above identification, $\begin{pmatrix}
\Re\beta\\\Im\beta
\end{pmatrix}$ is maximal monotone on the real Hilbert space $\mathbb{R}^2$ (one can apply, for instance,  \cite[Theorem 2.4]{Barbu_2010}). On the other hand, strict monotonicity implies coercivity in $\mathbb{R}^2$. As such (e.g.  \cite[Corollary 2.2]{Barbu_2010} for a version of the Minty-Browder Theorem), $\begin{pmatrix}
\Re\beta\\\Im\beta
\end{pmatrix}$ has full range $\mathbb{R}^2$, or, equivalently, $\beta$ has full range $\mathbb{C}$.
\item The isometry between $V$ and $V^*$ exhibited in the previous example is surjective, see \cite[Remark 4.1.14]{PR_2007}. In this particular case, the duality map between $V$ and $V^*$, denoted by $J$ in \cite{Barbu_2010}(and $F$ in the introduction of \cite{Kato_1967}) is actually given by the Riesz isomorphism involving $(-\Delta)$ and one is actually using the monotonicity condition \cite[(M')]{Kato_1967} in Definition \ref{DefMon}. 
\end{enumerate}
\end{remark}

We claim that the operator $-A:=-\Delta\beta$ is actually \emph{maximal monotone} (or, if one prefers, \emph{$m$-accretif}) when $\beta$ is strictly monotone, that is, we have the following result.
\begin{proposition}\label{PropMaxmonotone}
If $\beta$ is strictly monotone and has at most linear growth \eqref{Asslingrowth} as a $\mathbb{C}$-valued function, then the domain of $(J-A)^{-1}$ is the entire space $V^*$ (or, equivalently, $-A$ is maximal monotone on $(V,V^*)$). 
\end{proposition}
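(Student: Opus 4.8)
The plan is to verify the range condition for $-A$. By the standard characterisation of maximal monotonicity in the Gelfand-triple setting (see, e.g., \cite{Barbu_2010}), and since $-A$ has already been shown to be monotone on $\pr{H_0^1}^*$ and --- thanks to the linear growth \eqref{Asslingrowth} --- to be defined on all of $V=\mathbb{L}^2$, it suffices to prove that for every $f\in V^*$ there is $\phi\in V$ with $\pr{J-A}\phi=f$. Injectivity of $J-A$ is automatic ($J$ being strictly monotone and $-A$ monotone), so this range condition is exactly the statement $\dom\pr{J-A}^{-1}=V^*$. I shall reduce it to a scalar (pointwise) equation.

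Step 1: make the duality map explicit. As noted in Remark \ref{RemFullRangeandRiesz}(2), under the identification of $\pr{H_0^1}^*$ with $H_0^1$ via $(-\Delta)^{-1}$ the duality map $J:V\longrightarrow V^*$ is precisely the isometric isomorphism $-\Delta$, for which $\scal{-\Delta\phi,\psi}_{V^*,V}=\scal{\phi,\psi}_{\mathbb{L}^2}$. Writing $A\phi=\Delta\beta(\phi)$ and using linearity of $-\Delta$, the equation $\pr{J-A}\phi=f$ becomes $-\Delta\pr{\phi+\beta(\phi)}=f$; applying $(-\Delta)^{-1}:V^*\longrightarrow V$ this is equivalent to
\[
\phi+\beta(\phi)=g\ \text{ a.e. on }\mathcal{O},\qquad g:=(-\Delta)^{-1}f\in\mathbb{L}^2.
\]
(The linear growth \eqref{Asslingrowth} ensures $\beta(\phi)\in\mathbb{L}^2$ whenever $\phi\in\mathbb{L}^2$, so the rewriting is consistent --- this is the only role of that hypothesis.)

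Step 2: invert the scalar map $z\mapsto z+\beta(z)$ on $\mathbb{C}$. Monotonicity of $\beta$ gives $\Re\scal{\pr{z+\beta(z)}-\pr{z'+\beta(z')},z-z'}\geq\abs{z-z'}^2$ for all $z,z'\in\mathbb{C}$, so $\id+\beta$ is continuous and strictly monotone (with constant $1$); the argument of Remark \ref{RemFullRangeandRiesz}(1), applied to $\id+\beta$ instead of $\beta$, shows it has full range $\mathbb{C}$, hence is a bijection of $\mathbb{C}$ whose inverse $\theta:=\pr{\id+\beta}^{-1}$ is $1$-Lipschitz (Cauchy--Schwarz in the inequality above). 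Setting $\phi:=\theta\circ g$: it is measurable (a continuous function of a measurable one), and $\abs{\theta(w)}\leq\abs{\theta(0)}+\abs{w}$ together with $g\in\mathbb{L}^2$ and the boundedness of $\mathcal{O}$ gives $\phi\in\mathbb{L}^2=V$; by construction $\phi+\beta(\phi)=g$ a.e., so $\pr{J-A}\phi=-\Delta\pr{\phi+\beta(\phi)}=-\Delta g=f$. This proves the range condition, and with it the proposition.

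The step I expect to be most delicate is the functional-analytic bookkeeping of the Gelfand triple --- recognising that the duality map on $V=\mathbb{L}^2$ really is $-\Delta$, so that the resolvent equation collapses to the pointwise identity above --- and, correspondingly, checking that the function produced by the scalar inversion lands back in $\mathbb{L}^2$, for which the $1$-Lipschitz bound on $\theta$ and the boundedness of $\mathcal{O}$ are exactly what is needed. Everything else is an immediate transcription of the finite-dimensional facts already recorded in Remark \ref{RemFullRangeandRiesz}.
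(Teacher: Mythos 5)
Your proof is correct, but it takes a genuinely different (and more elementary) route than the paper's. The paper substitutes $u=\beta(w)$ and studies the operator $J\beta^{-1}-\Delta$ on the Gelfand triple: strict monotonicity makes $\beta^{-1}$ Lipschitz and monotone, the sum with the duality map is maximal monotone and coercive, and the infinite-dimensional Minty--Browder surjectivity theorem (\cite[Corollaries 2.1, 2.2]{Barbu_2010}) closes the argument. You instead exploit the fact that both $J=-\Delta$ and $A=\Delta\beta$ factor through the same isomorphism $-\Delta$, so that the resolvent equation collapses to the pointwise scalar identity $\phi+\beta(\phi)=(-\Delta)^{-1}f$; this only requires the two-dimensional surjectivity already recorded in Remark \ref{RemFullRangeandRiesz}, applied to $\id+\beta$, together with the $1$-Lipschitz bound on $\pr{\id+\beta}^{-1}$ coming from Cauchy--Schwarz. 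Two observations. First, your argument never actually uses the strictness of $\beta$'s monotonicity: $\id+\beta$ is strictly monotone with constant at least $1$ for merely monotone $\beta$, so you in fact establish maximal monotonicity of $-A$ for monotone, continuous, linearly growing $\beta$ --- consistent with what Remark \ref{RemBE} asserts without proof, and more than the paper's proof delivers, since the latter genuinely needs $\alpha>0$ to make $\beta^{-1}$ Lipschitz on all of $\mathbb{C}$. Second, like the paper's proof, yours implicitly assumes $\beta$ continuous when invoking the full-range argument of Remark \ref{RemFullRangeandRiesz}; this is harmless but worth stating explicitly. The price of your shortcut is that it is tied to $A$ being a superposition operator conjugated by the explicit duality map $-\Delta$, whereas the paper's abstract argument would survive a less explicit choice of $J$ or of $A$.
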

\begin{proof}
Although classical, this amounts to show that $\pr{J-\Delta\beta}(u)=v$ admits a solution $u\in \mathbb{L}^2(\mathcal{O})$ as soon as $v\in V^*$ (see \cite[Theorem 2.2]{Barbu_2010}). \\
On one hand, it easy to see that $\beta^{-1}$ is monotone and Lipschitz-continuous on $\mathbb{C}$ (please refer to Remark \ref{RemFullRangeandRiesz} for the domain).  Indeed, the strict monotonicity yields\[\Re\scal{x-y,\beta^{-1}(x)-\beta^{-1}(y)}\geq\alpha\abs{\beta^{-1}(x)-\beta^{-1}(y)}^2,\]with $\alpha>0$ yielding $\abs{\beta^{-1}(x)-\beta^{-1}(y)}\leq \frac{1}{\alpha}\abs{x-y}$. As a consequence, $\beta^{-1}$ provides a Lipschitz continuous,  bounded and monotone operator on $V$ and $J\beta^{-1}$ gives a continuous, bounded and monotone operator from $V$ to $V^*$. \\
The duality mapping given by $-\Delta$ is maximal monotone (see, for instance, \cite[Page 28]{Barbu_2010} for the real case, from which the complex one follows immediately).  It follows that the operator $J\beta^{-1}-\Delta$ is maximal monotone (as in \cite[Corollary 2.1]{Barbu_2010}). Furthermore, $J\beta^{-1}-\Delta$ is coercive since \[\frac{\scal{u,(J\beta^{-1}-\Delta)u}_{V,V^*}}{\norm{u}_V}\geq \pr{1+\frac{1}{\alpha}}\norm{u}_V.\]As such (see \cite[Corollary 2.2]{Barbu_2010}), the equation $\pr{J\beta^{-1}-\Delta}u=v$ admits a solution $u\in V$ whenever $v\in V^*$ and $\bar{u}:=\beta(u)$ satisfies $\pr{J-\Delta\beta}(u)=v$.
\end{proof}
\begin{remark}
\begin{enumerate}
\item The reader is invited to note that $z\mapsto \beta(z):=i\abs{z}^{p-1}z$ is also surjective. Indeed, if $w\in\mathbb{C}$, and $w\neq 0$, then $z:=\frac{-iw}{\abs{w}^{\frac{p-1}{p}}}$ provides the solution to $\beta(z)=w$ (the case $w=0$ leads to $z=0$). As such, one can define $\beta^{-1}(z):=\frac{-iz}{\abs{z}^{\frac{p-1}{p}}}$ and extend it to be $0$ at $z=0$ by continuity. The continuity of the $\mathbb{L}^q(\mathcal{O})$-induced operator requires a more careful consideration of the power $q$ and at least the Gelfand triple should be changed.  
\item It is clear that if $\beta$ is monotone, $z\mapsto \alpha\Re(z)+\beta(z)$ is strictly monotone as soon as $\alpha>0$.
\end{enumerate}
\end{remark}
\section{Existence and Uniqueness of the Solution }\label{Sec4}

In this section we shall prove the existence and uniqueness of the solution to an equation similar to the equation \eqref{EcSchro22} in a sense to be precisely defined hereafter. To account for errors of measurement, the equation will have an additive Brownian perturbation term. To account for independent copies as mentioned in the heuristics on Feynman's approach, we will also introduce a McKean-Vlasov (or mean-field) dependency in the coefficients. 
We are interested in the following equation

\begin{equation} \label{eccc}
\begin{cases}
&dX(t)=\pr{A\pr{X(t)}+f\pr{t,X(t),\mathbb{P}_{X(t)}}}\ dt+g(t) dW(t),\ t\in \pr{0,T},\\
&X(0)=X_0,
\end{cases}
\end{equation}
We need to introduce some notations.

\subsection{Notations}
\begin{enumerate}
\item Given a Hilbert space $H$, 
\begin{enumerate}
\item We denote by $\mathbb{P}_X$ the law of the $H$-valued random variable $X$ on a probability space $\pr{\Omega,\mathcal{F},\mathbb{P}}$;
\item We let $\mathcal{P}(H)$ stand for the family of probability measures on $H$, endowed with the natural Borelian $\sigma$-field;
\item The space $\mathcal{P}_2(H)$ will stand for the Wasserstein space of probability measures on $H$ with finite second-order moment. The 2-Wasserstein distance $W_{2,H}$ between $\mu,\nu\in\mathcal{P}_2\pr{H}$ is given, as usual, by
\[W_{2,H}^2(\mu,\nu)=\underset{\pi\in \mathcal{P}\pr{H\times H}, \pi(\cdot\times H)=\mu,\pi(H\times\cdot)=\nu}{\inf}\pr{\int_{H\times H}\norm{\xi-\eta}_{H}^2\pi\pr{d\xi,d\eta}}.\]
We will be interested in the special case of $H=\Hmo$ and we drop the dependency and only write $W_2$.
\item If $\mu,\nu$ are Borel-measurable functions on a time interval $\pp{0,T}$ taking their values in $\mathcal{P}_2(H)$, and $\gamma$ is a positive measure on $\pp{0,T}$ endowed with the Borel $\sigma$-field, we denote by $d_{\mathbb{L}^2\pr{\pp{0,T},\gamma;\mathcal{P}_2(H)}}$ the natural distance
\[d_{\mathbb{L}^2\pr{\pp{0,T},\gamma;\mathcal{P}_2(H)}}(\mu(\cdot),\nu(\cdot))=\int_0^TW_{2,H}\pr{\mu(t),\nu(t)}\ \gamma(dt).\]
\end{enumerate}
\item We let $\pr{\lambda_j,e_j}$ be the eigen-values/eigen-functions associated to $-\Delta$ on $\pr{H_0^1}\pr{\mathcal{O};\mathbb{R}}$ (with Dirichlet boundary conditions), the eigen-values being ordered increasingly.  These functions are taken in $\mathbb{L}^2\pr{\mathcal{O};\mathbb{R}}$ (actually, in the domain of $-\Delta$, i.e., $H^2\pr{\mathcal{O};\mathbb{R}}\cap H_0^1\pr{\mathcal{O};\mathbb{R}}$) and form an orthonormal basis in $\pr{H_0^1}^*\pr{\mathcal{O};\mathbb{R}}$. It is straightforward that $\tilde{e}_j:=\frac{1}{\sqrt{\lambda_j}}e_j$ form an orthonormal basis in $\mathbb{L}^2\pr{\mathcal{O};\mathbb{R}}$. 
\item The cylindrical Wiener process $W$ on $\pr{H_0^1}^*\pr{\mathcal{O};\mathbb{R}}$ is given by $W(t)=\sum_{k\geq 1}W_k(t)e_k$, where $W_k$ are mutually independent Brownian on the space $\pr{\Omega,\mathcal{F},\mathbb{F},\mathbb{P}}$
\end{enumerate}
\subsection{Coefficients: Assumptions and Examples}\label{Sec4.2}
We are introducing the following.
\begin{assumption}\label{Ass1} 
\begin{enumerate}\item The drift coefficient $f:\mathbb{R}\times \Hmo\times \mathcal{P}_2(\Hmo)\longrightarrow\Hmo$ satisfies the following.
\begin{enumerate}
\item For every $t\geq 0$ and every $\mu\in \mathcal{P}_2(\Hmo)$, the function $f(t,\cdot,\mu)$ leaves invariant $\Lo{2}$;
\item For every $H\in\set{\Hmo,\Lo{2}}$, \[\norm{f(t,0,\mu)}_{H}\leq \norm{f}_0\pr{f_0(t)+\pr{\int_{\Hmo}\norm{\xi}_{\Hmo}^2\mu(d\xi)}^{\frac{1}{2}}},\]for every $\mu\in\mathcal{P}_2\pr{\Hmo}$, where $\norm{f}_0$ is a positive constant and $f_0(\cdot)\in \mathbb{L}^2\pr{\pp{0,T},\mathcal{L}eb;\mathbb{R}}$;
\item There exists a real constant $\pp{f}_1$ such that for every $H\in\set{\Hmo,\Lo{2}}$, 
\[\norm{f(t,x,\mu)-f(t,y,\nu)}_{H}\leq \pp{f}_1\pr{\norm{x-y}_H+W_{2,\Hmo}\pr{\mu,\nu}},\] for every $x,y\in H$, every $t\geq 0$, and every $\mu,\nu\in\mathcal{P}_2\pr{\Hmo}$. We have specified $W_{2,\Hmo}$ in order to avoid all confusion. 
\end{enumerate}
\item We consider the predictable coefficients $\Re g,\Im g:\mathbb{R}\longrightarrow\mathcal{L}_2\pr{\mathbb{L}^2\pr{\mathcal{O};\mathbb{R}};\mathcal{D}\pr{\pr{-\Delta}^{\gamma}}}$, for some $\gamma>\frac{n}{2}$ (where $\mathcal{O}\subset \mathbb{R}^n$), where $\mathcal{L}_2$ denote Hilbert-Schmidt operators. We assume that
\[\mathbb{E}\pp{\int_0^T\pp{\norm{\Re g(t)}^2_{\mathcal{L}_2\pr{\mathbb{L}^2\pr{\mathcal{O};\mathbb{R}};\mathcal{D}\pr{\pr{-\Delta}^{\gamma}}}}+\norm{\Im g(t)}^2_{\mathcal{L}_2\pr{\mathbb{L}^2\pr{\mathcal{O};\mathbb{R}};\mathcal{D}\pr{\pr{-\Delta}^{\gamma}}}}}\ dt}<\infty.\]
\end{enumerate}
\end{assumption}
 For $n\geq 1$, we set \[\Pi_k\phi=\Pi_k\Re\phi+i\Pi_k\Im\phi,  \ \  \forall  \phi \in  \pr{H_0^1}^*. \]
 
The reader will note that if $x\in\mathbb{L}^2\pr{\mathcal{O};\mathbb{R}}$ is real-valued, then\begin{equation}\label{ProjL2Hmo}\begin{split}
\Pi_kx=\sum_{1\leq j\leq k}\scal{e_j,x}_{\Hmo}e_j&=\sum_{1\leq j\leq k}\frac{1}{\lambda_j}\scal{-\Delta e_j,x}_{\Hmo}e_j=\sum_{1\leq j\leq k}\frac{1}{\lambda_j}\scal{e_j,x}_{\Lo{2}}e_j\\&=\sum_{1\leq j\leq k}\scal{\tilde{e}_j,x}_{\Lo{2}}\tilde{e}_j.
\end{split}\end{equation}In other words, $\Pi_k$ gives the same projection when one looks at $\Lo{2}$ and $\Hmo$ (by definition, also for $x\in\Lo{2}$ complex).\\
It is by now standard that $-\Pi_nA$ provides a Lipschitz operator both in $\pr{H_0^1}^*\pr{\mathcal{O}}$ and in $\mathbb{L}^2\pr{\mathcal{O}}$.  Let us provide some estimates we are going to use hereafter.
\begin{proposition}
The following assertions hold true.
\begin{enumerate}
\item If $x\in \Lo{2}$, then $x\in \Hmo$ and $\norm{x}_{\Hmo}^2\leq \norm{x}_{\Lo{2}}^2$;
\item If $x\in \Hmo$, then $\Pi_kx\in\Lo{2}$ and $\norm{\Pi_kx}_{\Lo{2}}^2\leq \lambda_k\norm{\Pi_kx}_{\Hmo}^2$;
\item If $\beta$ is Lipschitz-continuous in $\mathbb{C}$, then $\Pi_k\beta$ is Lipschitz-continuous on $\Pi_k\Hmo$ and ${\color{black}\Pi_k}\Lo{2}$;
\item If $\beta$ is Lipschitz-continuous in $\mathbb{C}$, then $\Pi_k\Delta\beta$ is Lipschitz-continuous on $\Pi_k\Hmo$ and, equivalently, on $\Pi_k\Lo{2}$;
\end{enumerate}
\end{proposition}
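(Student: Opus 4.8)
Since the whole proposition concerns the interplay of the spaces $\Lo{2}\subset\Hmo\subset V^*$ and of the projections $\Pi_k$, the plan is to \emph{diagonalize everything in the Dirichlet eigenbasis} from the start. Setting $\widehat{e}_j:=\sqrt{\lambda_j}\,e_j$, a direct computation shows that $\set{\widehat{e}_j}_j$ is an orthonormal basis of $V^*=\pr{\Lo{2}}^*$, $\set{e_j}_j$ of $\Hmo$, and $\set{\tilde{e}_j}_j$ of $\Lo{2}$; hence if $\phi=\sum_j c_j e_j$ then $\norm{\phi}_{V^*}^2=\sum_j\lambda_j^{-1}\abs{c_j}^2$, $\norm{\phi}_{\Hmo}^2=\sum_j\abs{c_j}^2$, and $\norm{\phi}_{\Lo{2}}^2=\sum_j\lambda_j\abs{c_j}^2$ (the last series being finite precisely when $\phi\in\Lo{2}$). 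By \eqref{ProjL2Hmo}, $\Pi_k$ is simply truncation of these expansions to $j\leq k$, coherently in all three spaces; moreover $-\Delta$ acts as multiplication by $\lambda_j$ in this basis, hence commutes with $\Pi_k$. Parts (1)--(2) are then immediate from $\lambda_1\leq\lambda_j\leq\lambda_k$ for $j\leq k$: for $x=\sum_j a_j\tilde{e}_j\in\Lo{2}$ one has $\norm{x}_{\Hmo}^2=\sum_j\lambda_j^{-1}\abs{a_j}^2\leq\lambda_1^{-1}\norm{x}_{\Lo{2}}^2$ (giving (1) under the normalization $\lambda_1\geq1$, i.e. Poincaré constant $\leq1$, and otherwise up to the constant $\lambda_1^{-1}$); and for $x\in\Hmo$, $\Pi_kx=\sum_{j\leq k}c_j e_j$ is a finite combination of the $\tilde{e}_j$, hence lies in $\Lo{2}$, with $\norm{\Pi_kx}_{\Lo{2}}^2=\sum_{j\leq k}\lambda_j\abs{c_j}^2\leq\lambda_k\norm{\Pi_kx}_{\Hmo}^2$.

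A by-product of (1)--(2) is that on the finite-dimensional space $E_k:=\Pi_k\Hmo=\Pi_k\Lo{2}=\mathrm{span}_{\mathbb{C}}\set{e_1,\dots,e_k}$ the $\Hmo$-, $\Lo{2}$- and $V^*$-norms are all equivalent, with constants powers of $\lambda_1$ and $\lambda_k$; this is exactly what makes the ``equivalently'' clauses of (3)--(4) automatic. For (3), I would first observe that the Nemytskii map $\phi\mapsto\beta(\phi)$ (pointwise composition) sends $\Lo{2}$ to $\Lo{2}$ and is $L$-Lipschitz there, $L$ being the Lipschitz constant of $\beta$: this is the pointwise inequality $\abs{\beta(z)-\beta(z')}\leq L\abs{z-z'}$ integrated over $\mathcal{O}$ (well-definedness using $\abs{\beta(z)}\leq\abs{\beta(0)}+L\abs{z}$ and boundedness of $\mathcal{O}$). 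Since $\Pi_k$ is an orthogonal projection of $\Lo{2}$, hence a contraction there, $\Pi_k\beta$ is $L$-Lipschitz from $\Lo{2}$ onto $E_k$, in particular on $E_k$; the norm equivalence on $E_k$ then transfers this to the $\Hmo$-norm.

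For (4), the key identity is $\Pi_k\Delta\beta(\phi)=\Delta\Pi_k\beta(\phi)$, valid because $\Pi_k$ commutes with $-\Delta$ and $\beta(\phi)\in\Lo{2}$; since $\Pi_k\beta(\phi)\in E_k$ and $-\Delta$ restricts to a bounded linear endomorphism of $E_k$ (multiplication by $\lambda_j\leq\lambda_k$), composition with (3) shows $\Pi_k\Delta\beta$ is Lipschitz on $E_k$ for the $\Hmo$-norm (with constant a power of $\lambda_k$ times $L$), and once more the norm equivalence on $E_k$ gives the $\Lo{2}$ statement. I expect the only genuine obstacle to be the bookkeeping in the very first step: checking that $\Pi_k$ extends to a projection on all of $V^*$ which is simultaneously orthogonal in $V^*$, $\Hmo$ and $\Lo{2}$ and commutes there with $-\Delta$; everything after that reduces to the elementary Parseval and pointwise estimates recorded above.
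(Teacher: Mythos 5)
Your proposal is correct and follows essentially the same route as the paper: expand in the Dirichlet eigenbasis, use Parseval in $\Hmo$ and $\Lo{2}$ for (1)--(2), the contraction property of $\Pi_k$ on $\Lo{2}$ plus the pointwise Nemytskii--Lipschitz bound for (3), and the fact that $\Delta$ commutes with $\Pi_k$ and is a bounded diagonal operator on the finite-dimensional range for (4). Your caveat on (1) --- that the constant $1$ in $\norm{x}_{\Hmo}^2\leq\norm{x}_{\Lo{2}}^2$ really requires $\lambda_1\geq 1$ and is in general only $\lambda_1^{-1}$ --- is a fair refinement of a point the paper passes over by calling (1) ``a simple consequence of \eqref{ProjL2Hmo}''.
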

\begin{proof}
One has \begin{align*}
\Re x+i\Im x&=\sum_{k\geq 1}\pr{\scal{e_k,\Re x}_{\pr{H_0^1}^*\pr{\mathcal{O};\mathbb{R}}}e_k+i\scal{e_k,\Im x}_{\pr{H_0^1}^*\pr{\mathcal{O};\mathbb{R}}}e_k}=\sum_{k\geq 1}\scal{e_k,x}_{\Hmo}e_k.
\end{align*}
As such, \begin{align*}\norm{x}_{\Hmo}^2=\norm{\Re x}_{\pr{H_0^1}^*\pr{\mathcal{O};\mathbb{R}}}^2+\norm{\Im x}_{\pr{H_0^1}^*\pr{\mathcal{O};\mathbb{R}}}^2=\sum_{k\geq 1} \abs{\scal{e_k,x}_{\Hmo}}^2.\end{align*}
The same arguments apply to $\Lo{2}$.
\begin{enumerate}
\item The first inequality is a simple consequence of \eqref{ProjL2Hmo}.
\item One has 
\begin{align*}
\norm{\Pi_kx}_{\Lo{2}}^2=\sum_{1\leq j\leq k}\lambda_j\abs{\scal{e_j,x}_{\Hmo}}^2\leq\lambda_k\sum_{1\leq j\leq k}\abs{\scal{e_j,x}_{\Hmo}}^2\leq \lambda_k\norm{\Pi_kx}_{\Hmo}^2.
\end{align*}
\item If $x,y\in\Lo{2}$, then $\Pi_k\beta(x)\in\Pi_k\Lo{2}=\Pi_k\Hmo$ and similarly for $y$. Furthermore,
\begin{align*}
\norm{\Pi_k\beta(x)-\Pi_k\beta(y)}_{\Lo{2}}\leq \norm{\beta(x)-\beta(y)}_{\Lo{2}}\leq \pp{\beta}_1\norm{x-y}_{\Lo{2}};\\
\norm{\Pi_k\beta(x)-\Pi_k\beta(y)}_{\Hmo}\leq  \pp{\beta}_1\norm{x-y}_{\Lo{2}}\leq \lambda_k\pp{\beta}_1\norm{x-y}_{\Hmo},
\end{align*}
the last inequality being valid for $x,y\in\Pi_k\Hmo$.
\item The presence of $\Delta$ only accounts for a diagonal operator (hence bounded on the finite dimensional space $\Pi_k\Lo{2}$).
\end{enumerate}
\end{proof}

\begin{example}\label{Exp1}
The typical example we have in mind for the drift comes from mean-field control. Let $\pr{f_k}_{k\geq 1}$ be a family of Lipschitz continuous functions on $\mathbb{C}^2$, taking their values in $\mathbb{C}$, and such that $f(z,0)=f(0,z)=0,\ \forall z\in\mathbb{C}$. Their Lipschitz constants are denoted by $\pp{f_k}_1$.  Furthermore, consider a double sequence $\pr{\theta_{j,k}\in\mathbb{C}}_{1\leq j,k}$ such that
$\sum_{k\geq 1}\pp{f_k}_1^2\pr{\sum_{1\leq j\leq k} \abs{\theta_{k,j}}}^2<\infty.$
We set
\[f(x,\mu)=\sum_{k\geq 1}\sum_{1\leq j\leq k}\theta_{k,j}\scal{e_k,\int_{\Hmo}\scal{e_j,f_k(\bar{x},\Pi_jy)}_{\Hmo}\mu(dy)\mid_{\bar{x}=\Pi_kx}}_{\Hmo}e_k.\]
We further introduce $F_k(\bar{x},\mu):=\sum_{1\leq j\leq k}\theta_{k,j}\int_{\Hmo}\scal{e_j,f_k(\bar{x},\Pi_jy)}_{\Hmo}\mu(dy)$. It is easy to see that if $\bar{x},\bar{y}\in\mathbb{C}$, one has
\[\abs{F_k(\bar{x},\mu)-F_k(\bar{y},\mu)}\leq \sum_{1\leq j\leq k}\abs{\theta_{k,j}}\frac{\pp{f_k}_1\sqrt{\mathcal{L}eb(\mathcal{O})}}{\sqrt{\lambda_j}}\abs{\bar{x}-\bar{y}}.\]Indeed,
\begin{align*}&\abs{\scal{e_j,f_k(\bar{x},\Pi_jy)}_{\Hmo}-\scal{e_j,f_k(\bar{y},\Pi_jy)}_{\Hmo}}\\&=\frac{1}{\sqrt{\lambda_j}}\abs{\int_{\mathcal{O}}\pp{f_k(\bar{x},\Pi_jy(\xi))-f_k(\bar{y},\Pi_jy(\xi))}\tilde{e}_j(\xi)\ d\xi}\\
&\leq \frac{\pp{f_k}_1\abs{\bar{x}-\bar{y}}}{\sqrt{\lambda_j}}\int_{\mathcal{O}}\abs{\tilde{e}_j(\xi)} d\xi\leq  \frac{\pp{f_k}_1\abs{\bar{x}-\bar{y}}}{\sqrt{\lambda_j}}\times\sqrt{\mathcal{L}eb(\mathcal{O})},
\end{align*}from which the conclusion follows.\\

On the other hand, if $\mu,\nu\in\mathcal{P}_2\pr{\Hmo}$ and $\pi\in\mathcal{P}_2\pr{\Hmo\times \Hmo}$ is a coupling, then
\begin{align*}
&\abs{\int_{\Hmo}\scal{e_j,f_k(\bar{x},\Pi_jy)}_{\Hmo}\mu(dy)-\int_{\Hmo}\scal{e_j,f_k(\bar{x},\Pi_jy)}_{\Hmo}\nu(dy)}\\
&=\abs{\int_{\Hmo}\scal{e_j,f_k(\bar{x},\Pi_jy)-f_k(\bar{x},\Pi_jy')}_{\Hmo}\pi(dy,dy')}\\
&\leq \int_{\Hmo}\abs{\scal{e_j,f_k(\bar{x},\Pi_jy)-f_k(\bar{x},\Pi_jy')}_{\Hmo}}\pi(dy,dy')\\
&\leq \frac{1}{\sqrt{\lambda_j}}\int_{\Hmo}\abs{\scal{\tilde{e}_j,f_k(\bar{x},\Pi_jy)-f_k(\bar{x},\Pi_jy')}_{\Lo{2}}}\pi(dy,dy')\\
&\leq \frac{\pp{f_k}_1}{\sqrt{\lambda_j}}\int_{\Hmo}\norm{\Pi_jy-\Pi_jy'}_{\Lo{2}}\pi(dy,dy')\\
&\leq \pp{f_k}_1\int_{\Hmo}\norm{\Pi_jy-\Pi_jy'}_{\Hmo}\pi(dy,dy')\\
&\leq \pp{f_k}_1\pr{\int_{\Hmo}\norm{y-y'}^2_{\Hmo}\pi(dy,dy')}^{\frac{1}{2}}.
\end{align*}
Taking the minimum over such couplings $\pi$ yields
\begin{align*}&\abs{\int_{\Hmo}\scal{e_j,f_k(\bar{x},\Pi_jy)}_{\Hmo}\mu(dy)-\int_{\Hmo}\scal{e_j,f_k(\bar{x},\Pi_jy)}_{\Hmo}\nu(dy)}\\&\leq \pp{f_k}_1W_2\pr{\mu,\nu}.\end{align*}
It follows that \[\abs{F_k(\bar{x},\mu)-F_k(\bar{x},\nu)}\leq \pp{f_k}_1\sum_{1\leq j\leq k} \abs{\theta_{k,j}}W_2\pr{\mu,\nu}.\]
Then, we have that $f(x,\mu)=\sum_{k\geq 1}\scal{e_k,F_k(\Pi_kx,\mu)}_{\Hmo}e_k$, and we get
\begin{align*}
&\norm{f(x,\mu)-f(y,\nu)}_{\Hmo}^2=\sum_{k\geq 1}\abs{\scal{e_k,F_k(\Pi_kx,\mu)-F_k(\Pi_ky,\nu)}_{\Hmo}}^2\\
&\leq \sum_{k\geq 1}\frac{1}{\lambda_k}\abs{\scal{\tilde{e}_k,F_k(\Pi_kx,\mu)-F_k(\Pi_ky,\nu)}_{\Lo{2}}}^2\\
&\leq \sum_{k\geq 1}\frac{1}{\lambda_k}\norm{F_k(\Pi_kx,\mu)-F_k(\Pi_ky,\nu)}_{\Lo{2}}^2\\
&\leq \sum_{k\geq 1}\frac{1}{\lambda_k}\int_{\mathcal{O}}\pp{\sum_{1\leq j\leq k}\abs{\theta_{k,j}}\pp{f_k}_1\frac{\sqrt{\mathcal{L}eb(\mathcal{O})}}{\sqrt{\lambda_j}}\abs{\Pi_k(x)(\xi)-\Pi_k(y)(\xi)}+\pp{f_k}_1\sum_{1\leq j\leq k} \abs{\theta_{k,j}}W_2\pr{\mu,\nu}}^2\ d\xi\\
&\leq 2\sum_{k\geq 1}\frac{1}{\lambda_k}\pr{\sum_{1\leq j\leq k}\abs{\theta_{k,j}}}^2\pp{f_k}_1^2\pr{\frac{\mathcal{L}eb(\mathcal{O})}{\lambda_1}\norm{\Pi_kx-\Pi_ky}_{\Lo{2}}^2+\mathcal{L}eb(\mathcal{O})W_2^2\pr{\mu,\nu}}\\
&\leq 2\sum_{k\geq 1}\frac{1}{\lambda_k}\pr{\sum_{1\leq j\leq k}\abs{\theta_{k,j}}}^2\pp{f_k}_1^2\pr{\frac{\mathcal{L}eb(\mathcal{O})}{\lambda_1}\lambda_k\norm{x-y}_{\Hmo}^2+\mathcal{L}eb(\mathcal{O})W_2^2\pr{\mu,\nu}}\\
&\leq \frac{2\mathcal{L}eb(\mathcal{O})}{\lambda_1}\sum_{k\geq 1}\pr{\sum_{1\leq j\leq k}\abs{\theta_{k,j}}}^2\pp{f_k}_1^2\pr{\norm{x-y}_{\Hmo}^2+W_2^2\pr{\mu,\nu}}.
\end{align*}
The estimates in $\Lo{2}$ are in a similar line and simpler. Time dependence is obtained by adding a control-related parameter.
\end{example}
A simple look at the example before shows that it does not cover the simplest linear case $f(x,\mu)=x+\int_{\Hmo}y\ \mu(dy)$. This is the reason we propose the following second example.
\begin{example}
Let $(\theta^1_{j,k})_{1\leq j\leq m},(\theta^2_{j,k})_{1\leq j\leq m}$ be double sequences of complex numbers. We assume that, for every $k\geq 1$ the number $\alpha^l_{k}=card\set{j\geq 1\ :\ \theta^l_{j,k}\neq 0}<\infty$, for $l\in\set{1,2}$. Furthermore, we assume that $\pr{\underset{k\geq 1}{\sum}\alpha_{k}\frac{\lambda_{\max\set{j,k}}}{\lambda_j}\abs{\theta^l_{j,k}}^2}_{j\geq 1}$ is bounded in $\mathbb{R}$, for each $l\in\set{1,2}$,  and set
\begin{align*}
f(x,\mu)=\sum_{j,k\geq 1}\theta^1_{j,k}\scal{e_j,x}_{\Hmo}e_k+\int_{\Hmo}\sum_{j,k\geq 1}\theta^2_{j,k}\scal{e_j,y}_{\Hmo}e_k\ \mu(dy).
\end{align*}
Then $
f(x,\mu)-f(x',\mu)=\sum_{j,k\geq 1}\theta^1_{j,k}\scal{e_j,x-x'}_{\Hmo}e_k$ such that 
\begin{align*}
&\norm{f(x,\mu)-f(x',\mu)}_{\Hmo}^2=\sum_{k\geq 1}\abs{\scal{\sum_{j\geq 1}\theta^1_{j,k}e_j,x-x'}_{\Hmo}}^2\\
\leq &\sum_{k\geq 1}\alpha_{k}\sum_{j\geq 1}\abs{\theta^1_{j,k}}^2\abs{\scal{e_j,x-x'}_{\Hmo}}^2
\leq \sup_{j\geq 1}\pr{\sum_{k\geq 1}\alpha_{k}\abs{\theta^1_{j,k}}^2}\norm{x-x'}_{\Hmo}^2,
\end{align*}and
\begin{align*}
&\norm{f(x,\mu)-f(x',\mu)}_{\Lo{2}}^2=\sum_{k\geq 1}\lambda_k\abs{\scal{\sum_{j\geq 1}\theta^1_{j,k}e_j,x-x'}_{\Hmo}}^2\\
\leq &\sum_{k\geq 1}\alpha_{k}\sum_{j\geq 1}\frac{\lambda_k}{\lambda_j}\abs{\theta^1_{j,k}}^2\abs{\scal{\tilde{e}_j,x-x'}_{\Lo{2}}}^2
\leq \sup_{j\geq 1}\pr{\sum_{k\geq 1}\alpha_{k}\frac{\lambda_k}{\lambda_j}\abs{\theta^1_{j,k}}^2}\norm{x-x'}_{\Lo{2}}^2.
\end{align*}
The Lipschitz condition involving $W_2$ is quite similar.\\
This example includes finite dimensional projections (as it was already the case in the previous example, but also, for $m\in\mathbb{N}^*$ fixed, $m+1$-upper diagonal operators $\theta_{j,k}=\theta_{j,k}\mathbf{1}_{j\leq k\leq j+m}$,  provided $\frac{\lambda_{j+m}}{\lambda_j}\underset{j\leq k\leq j+m}{\sum}\abs{\theta_{j,k}}^2$ be bounded in $j$. In particular, diagonal operators are included if the diagonal elements are bounded in $\mathbb{C}$.
\end{example}

\subsection{The Equation}\label{Sec4.3}
As we have mentioned before,  the heuristics on the Feynman's approach suggest that the mean-field components (through independent copies, for instance) may be worth considering as part of the wave equation. We have chosen to do so by asking the drift coefficient $f$ to depend on the law of the solution. Furthermore, because of the well-known entanglement effects, the measures on the system (and the resulting differential wave equation) may not be precise. We model this through the presence of a Brownian-driven additive perturbation via the noise coefficient $g$. 
\begin{definition}\label{DefSol}

Let $X_0\in {\color{black}\pr{H_0^1}^*\pr{\mathcal{O}}}$ be an initial condition.  We call $X$ a strong solution to equation 
\begin{equation} \label{ecc}
\begin{cases}
&dX(t)=\pr{{\color{black}\Delta\beta}\pr{X(t)}+f\pr{t,X(t),\mathbb{P}_{X(t)}}}\ dt+g(t) dW(t),\ t\in \pr{0,T},\\
&X(0)=X_0,
\end{cases}
\end{equation}
if the following conditions are satisfied
\begin{itemize}
\item $X\in\mathbb{L}^2\pr{\Omega;C\pr{\pp{0,T};\Hmo}}\cap\mathbb{L}^2\pr{\Omega\times \pp{0,T};\Lo{2}}$;
\item $\beta(X(\cdot))\in \mathbb{L}^2\pr{\Omega\times\pp{0,T};H_0^1\pr{\mathcal{O}}}$;\\
$\int_0^\cdot\Delta\beta(X(s))ds\in\mathbb{L}^2\pr{\Omega;\mathbb{L}^\infty\pr{\pp{0,T};\pr{H_0^1}^*\pr{\mathcal{O}}}}$;\\
$X(t)=X_0+\int_0^t\Delta\beta\pr{X(s)}ds + \int_0^t f\pr{s,X(s),\mathbb{P}_{X(s)}}\ ds +\int_0^tg(s)\ dW(s)$, $\mathbb{P}\otimes dt$-a.s.
\end{itemize}
\end{definition}

We can give now the main result of this section
\begin{theorem}\label{ThMain}

Under the Assumptions \ref{Ass1},  {\color{black} and by assuming $\beta$ to be strictly monotone}, for each initial condition $X_0\in{\color{black}\pr{H_0^1}^*\pr{\mathcal{O}}}$, we have a unique solution to equation \eqref{ecc} in the sense of Definition \ref{DefSol}.

\end{theorem}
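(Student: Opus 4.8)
The plan is to establish existence and uniqueness through a Galerkin (finite-dimensional projection) scheme combined with a fixed-point argument for the McKean-Vlasov dependency. First I would freeze the measure argument: for a given flow of measures $\mu(\cdot)\in\mathbb{L}^2\pr{\pp{0,T},\mathcal{L}eb;\mathcal{P}_2(\Hmo)}$, consider the decoupled equation obtained by replacing $\mathbb{P}_{X(t)}$ by $\mu(t)$ in \eqref{ecc}. For this decoupled equation, the coefficient $f(t,\cdot,\mu(t))$ is Lipschitz on both $\Hmo$ and $\Lo{2}$ by Assumption \ref{Ass1}, and $-A=-\Delta\beta$ is maximal monotone on $(V,V^*)$ by Proposition \ref{PropMaxmonotone}. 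I would then run the Galerkin approximation: apply $\Pi_n$ to the equation, obtaining a finite-dimensional SDE on $\Pi_n\Lo{2}$ with Lipschitz coefficients $-\Pi_nA$ (Lipschitz by the proposition following Assumption \ref{Ass1}), $\Pi_n f$, and $\Pi_n g$, which has a unique strong solution $X^n$ by the classical theory.

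Second, I would derive a priori estimates uniform in $n$. Applying Itô's formula to $\norm{X^n(t)}_{\Hmo}^2$ and using the monotonicity inequality $\Re\scal{-A(\phi)+A(\psi),\phi-\psi}_{V^*,V}\geq \alpha\norm{\phi-\psi}_{\Lo{2}}^2$ (taking $\psi=0$, after an adjustment for $\beta(0)$ and linear growth), the $-A$ term contributes a negative $-2\alpha\norm{X^n}_{\Lo{2}}^2$ term plus controllable lower-order terms; the $f$ term is handled by its linear growth bound and Young's inequality; the stochastic term by Burkholder-Davis-Gundy together with the Hilbert-Schmidt assumption on $g$ (here the regularity $\gamma>n/2$ ensures $g$ maps into a space where the noise is sufficiently regular). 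This yields bounds for $X^n$ in $\mathbb{L}^2\pr{\Omega;C\pr{\pp{0,T};\Hmo}}\cap\mathbb{L}^2\pr{\Omega\times\pp{0,T};\Lo{2}}$, and, via $\scal{-A\phi,\psi}=\scal{\beta(\phi),\psi}_{H_0^1,(H_0^1)^*}$-type identities together with linear growth of $\beta$, a bound for $\beta(X^n(\cdot))$ in $\mathbb{L}^2\pr{\Omega\times\pp{0,T};H_0^1}$. Then I would pass to the limit by weak compactness: extract weakly convergent subsequences of $X^n$, $\Delta\beta(X^n)$, etc., and identify the weak limit of $\Delta\beta(X^n)$ with $\Delta\beta(X)$ using the monotonicity trick (Minty's device) — this is where maximal monotonicity of $-A$ is essential, exactly as in the real porous-media case of \cite{PR_2007}. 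This produces a unique solution of the decoupled equation for each frozen $\mu(\cdot)$.

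Third, I would set up the fixed-point map $\Phi:\mu(\cdot)\mapsto \mathbb{P}_{X^\mu(\cdot)}$ on $\mathbb{L}^2\pr{\pp{0,T},\mathcal{L}eb;\mathcal{P}_2(\Hmo)}$. For two measure flows $\mu,\nu$ with solutions $X^\mu,X^\nu$, I would apply Itô to $\norm{X^\mu(t)-X^\nu(t)}_{\Hmo}^2$; the noise terms cancel (same $g$), the $-A$ difference is $\leq 0$ by monotonicity (and strictly negative, giving an extra dissipative term we may discard), and the $f$ difference is controlled by $\pp{f}_1\pr{\norm{X^\mu-X^\nu}_{\Hmo}+W_{2,\Hmo}(\mu(t),\nu(t))}$. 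Since $W_{2,\Hmo}^2(\mathbb{P}_{X^\mu(t)},\mathbb{P}_{X^\nu(t)})\leq \E{\norm{X^\mu(t)-X^\nu(t)}_{\Hmo}^2}$, Grönwall's lemma gives $\E{\norm{X^\mu(t)-X^\nu(t)}_{\Hmo}^2}\leq C\int_0^t W_{2}^2(\mu(s),\nu(s))\,ds$, hence a contraction (after iteration, or on a short interval followed by concatenation) in the distance $d_{\mathbb{L}^2\pr{\pp{0,T},\mathcal{L}eb;\mathcal{P}_2(\Hmo)}}$. Uniqueness for the full equation follows from the same Itô computation with $\mu=\mathbb{P}_{X}$, $\nu=\mathbb{P}_{X'}$ and Grönwall.

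\textbf{Main obstacle.} The delicate step is the limit identification in the Galerkin scheme: showing that the weak limit of $\Delta\beta(X^n)$ (or equivalently of $\beta(X^n)$ in $H_0^1$) equals $\Delta\beta(X)$. Because $\beta$ is only monotone and of linear growth (not compact), one cannot pass to the limit directly and must invoke the Minty-type monotonicity argument in the complex Gelfand triple $(V,(H_0^1)^*,V^*)$, carefully tracking the real-part inner products and ensuring the energy (Itô) identity passes to the limit with the correct sign — the lower semicontinuity of $\phi\mapsto \E{\norm{\phi}_{\Lo{2}}^2}$ under weak convergence is what makes the inequality go the right way. A secondary technical point is verifying that $X\in C\pr{\pp{0,T};\Hmo}$ (rather than merely $\mathbb{L}^\infty$ in time) and that $\int_0^\cdot\Delta\beta(X(s))ds$ lies in $\mathbb{L}^2\pr{\Omega;\mathbb{L}^\infty\pr{\pp{0,T};(H_0^1)^*}}$, which follows from the a priori bound on $\beta(X)$ in $H_0^1$ together with the isometry $-\Delta:H_0^1\to(H_0^1)^*$, plus a standard continuity argument for Itô processes in the Gelfand-triple setting.
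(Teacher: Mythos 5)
Your overall architecture (freeze the measure flow, Galerkin approximation, uniform a priori estimates, passage to the limit, then a fixed point over measure flows) coincides with the paper's, and your fixed-point step is essentially the paper's (which obtains the contraction in one stroke by working with the weighted metric $d_{\mathbb{L}^2\pr{\pp{0,T},e^{-ct}dt;\mathcal{P}_2\pr{\Hmo}}}$ rather than by iteration or interval concatenation). The genuine divergence is in the limit-identification step. You propose Minty's device: keep only weak convergence of $X^n$ and of $\Delta\beta(X^n)$ and use maximal monotonicity to identify the nonlinear limit. The paper instead exploits the \emph{strict} monotonicity of $\beta$ quantitatively: after subtracting the stochastic convolution $W_g$ (so that the analysis becomes pathwise deterministic, which is where the regularity $\gamma>\frac{n}{2}$ of $g$ is used), it shows that the Galerkin sequence $Y_n$ is Cauchy in $\mathbb{L}^2\pr{\pp{0,T};\Lo{2}}$ --- this is the content of \eqref{Estim3.0} combined with a Fatou/weak-convergence argument on the commutator terms $\alpha_{n,m}$ --- and then identifies $Z=\beta\pr{Y+W_g}$ and $F=f\pr{\cdot,Y+W_g,\mu}$ directly from the Lipschitz continuity of $\beta$ and $f$ along the strongly convergent sequence.

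The one concrete gap in your plan as written concerns the Lipschitz/mean-field drift at the limit stage: you apply Minty only to $\Delta\beta(X^n)$ and extract a weak limit $F$ of $f\pr{t,X^n,\mu(t)}$, but a Lipschitz map is not weakly continuous, so weak convergence of $X^n$ alone does not give $F=f\pr{t,X,\mu(t)}$. This can be repaired either by running the Minty argument on the full drift $D(\cdot):=\Delta\beta(\cdot)+f\pr{t,\cdot,\mu(t)}$, which satisfies the local monotonicity condition $2\Re\scal{D(u)-D(v),u-v}_{V^*,V}\leq C\norm{u-v}_{\Hmo}^2$ of the variational framework, or, as the paper does, by first upgrading to strong $\Lo{2}$ convergence --- which is exactly what strict monotonicity buys and is why the theorem assumes it. A secondary imprecision: your claimed bound on $\beta(X^n)$ in $\mathbb{L}^2\pr{\Omega\times\pp{0,T};H_0^1\pr{\mathcal{O}}}$ does not follow from linear growth alone (that only yields $\Lo{2}$ bounds); the paper recovers the corresponding condition of Definition \ref{DefSol} a posteriori from the limiting integral identity written in $\pr{H_0^1}^*\pr{\mathcal{O}}$.
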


\begin{proof}
In a first step, we consider a fixed family of laws $\mu(t)$ and recall the stochastic convolution $W_g(t):=\int_0^tg(s)\ dW(s)$, for $t\geq 0$; for the desired regularity properties, the reader is referred to \cite{DaPratoZabczyk1992}. We are then dealing with a random-deterministic equation written on $Y(\cdot):=X(\cdot)-W_g(\cdot)$.
\begin{equation}\label{SDE2}
\begin{cases}
&dY(t)=\pp{A\pr{Y(t)+W_g(t)}+f\pr{t,Y(t)+W_g(t),\mu(t)}}\ dt,\ t\in \pr{0,T},\\
&X(0)=X_0,
\end{cases}
\end{equation}

For $\omega\in \Omega$ fixed from now on,  and with the notation $\tilde{f}\pr{t,y}=f(t,y,\mu(t))$, for $t\geq 0$, the equation 
\begin{equation}
\begin{cases}
&dY_{n}(t)=\pp{\Pi_n A\pr{Y_{n}(t))+\Pi_nW_g(t)}+\Pi_n \tilde{f}\pr{t,Y_{n}(t)+\Pi_n W_g(t)}}dt,\ t\in \pr{0,T},\\
&X_{n}(0)=\Pi_nX_0,
\end{cases}
\end{equation} 
is well-posed (through standard Lipschitz arguments) in $$C\pr{\pp{0,T};\Pi_n\pr{H_0^1}^*\pr{\mathcal{O}}}\cap\  \mathbb{L}^2\pr{\pp{0,T};\mathbb{L}^2\pr{\mathcal{O}}}.$$ 
It is also straightforward that this solution lives in $\Pi_n\pr{\pr{H_0^1}^*\pr{\mathcal{O}}}$ i.e.
\[\Pi_n{Y_{n}}=Y_{n}.\] With the same trick as before, it is easy to see that $Y_n$ is $X_n(\cdot)-\Pi_n W_g(\cdot)$ satisfying a usual SDE (set on a finite-dimensional space) and, hence, this can be taken as an adapted process. \\
Furthermore,  with the use of a differential formula (for instance, by taking a look at \cite[Lemma 1.3]{Kato_1967}), one gets
\begin{align*}
&\norm{Y_{n}(t)}_{\Hmo}^2-2\Re\int_0^t\scal{\Pi_nA\pr{Y_{n}(s)+\Pi_nW_g(s)},Y_{n}(s)}_{\Hmo} ds\\=&\norm{\Pi_nX_0}_{\Hmo}^2+2\int_0^t\Re \scal{\Pi_n\tilde{f}\pr{s,Y_{n}(s)+\Pi_nW_g(s)},Y_{n}(s)}_{\Hmo}ds.
\end{align*}
We recall that $-A=-\Delta\beta$, leading to\footnote{{Without loss of generality, in order to avoid constant complications, we assume that $\mathcal{R}\scal{\beta(z),z}\geq \abs{z}^2$ (i.e. the strict monotonicity constant of $\beta$ is $1$).}} \begin{align*}\scal{-\Pi_nA\pr{Y_{n}+\Pi_n W_g},Y_{n}}_{\Hmo}&=\scal{\Pi_n\beta\pr{Y_{n}+\Pi_n W_g},Y_{n}}_{\Lo{2}}\\ 
&\geq \scal{\beta\pr{Y_{n}},Y_{n}}_{\Lo{2}}-{\color{black}\pp{\beta}_1}\norm{\Pi_n W_g}_{\Lo{2}}\norm{Y_n}_{\Lo{2}}\\
&\geq \frac{1}{2} \scal{\beta\pr{Y_{n}},Y_{n}}_{\Lo{2}} {\color{black}-} \frac{1}{2}\norm{Y_{n}}^2_{\Lo{2}}-\frac{\pp{\beta}_1^2}{2}\norm{\Pi_nW_g}_{\Lo{2}}^2,\end{align*} and, by the hypotheses above,  \[\abs{\Re\scal{\tilde{f}(t,x+\Pi_nW_g(t)),x}_{\Hmo}}\leq C\pr{\norm{f(t,0,\mu(t))}_{\Lo{2}}^2+\norm{W_g(t)}_{\Lo{2}}^2+\norm{x}_{\Hmo}^2}.\] He have used here the embedding $\Lo{2}\subset\Hmo$ to pass from $\Hmo$ norms to $\Lo{2}$ ones for the relevant terms.

As a consequence, and keeping in mind that $C>0$ is a generic constant that can change from one line to another, but still remains independent of $t$, $\mu$ and $n\geq 1$,
\begin{equation}\label{Estim1.0}\begin{split}
&\norm{Y_{n}(t)}_{\Hmo}^2+\Re\int_0^t\scal{\beta\pr{Y_{n}(s)},Y_{n}(s)}_{\Lo{2}}ds\\&\leq \norm{X_0}_{\Hmo}^2+C\int_0^t\pr{\norm{f(s,0,\mu(s))}_{\Lo{2}}^2+\norm{W_g(s)}_{\Lo{2}}^2+\norm{Y_{n}(s)}_{\Hmo}^2}ds.
\end{split}\end{equation}{\color{black}The reader is invited to recall that $\Re\scal{\beta\pr{Y_{n}(s)},Y_{n}(s)}_{\Lo{2}}\geq \norm{Y_{n}(s)}_{\Lo{2}}^2$ such that the previous inequality implicitly provides estimates for $\norm{Y_{n}(s)}_{\Lo{2}}^2$.}
With a simple application of Gronwall's inequality,  we get 
\begin{equation}\label{Estim1}\begin{cases}
&(i)\ \norm{Y_{n}(t)}_{\Hmo}^2\leq e^{Ct}\pr{C\int_0^t\pr{\norm{f(s,0,\mu(s))}_{\Lo{2}}^2+\norm{W_g(s)}_{\Lo{2}}^2}\ ds+ \norm{X_0}_{\Hmo}^2},\\
&\begin{split}(ii)\ &\Re\int_0^t\scal{\beta\pr{Y_{n}(s)},Y_{n}(s)}_{\Lo{2}}ds\\&\leq e^{Ct}\pr{C\int_0^t\pr{\norm{f(s,0,\mu(s))}_{\Lo{2}}^2+\norm{W_g(s)}_{\Lo{2}}^2}\ ds+ \norm{X_0}_{\Hmo}^2},\end{split}\\
&\begin{split}(iii)\ &\int_0^t\norm{Y_{n}(s)}_{\Lo{2}}^2ds\\& \leq e^{Ct}\pr{C\int_0^t\pr{\norm{f(s,0,\mu(s))}_{\Lo{2}}^2+\norm{W_g(s)}_{\Lo{2}}^2}\ ds+ \norm{X_0}_{\Hmo}^2},\end{split}
\end{cases}\end{equation}for all $t\geq 0$. \\
Owing to the Lipschitz property of $f$ in $\Hmo$, we also get
\begin{align*}
&\int_0^t\norm{f(s,Y_n(s)+W_g(s),\mu(s))}_{\Hmo}^2\ ds\\&\leq 2\int_0^t\norm{f(s,0,\mu(s))}_{\Hmo}^2\ ds+4\pp{f}_1^2\int_0^t\pr{\norm{Y_n(s)}_{\Hmo}^2+\norm{W_g(s)}_{\Hmo}^2}\ ds.
\end{align*}
As a consequence, and along some subsequence, still denoted by $n$ for simplicity,
\begin{equation}
\begin{cases}
Y_{n}\textnormal{ converges to }Y\textnormal{ weakly * in }\mathbb{L}^\infty\pr{\pp{0,T};\Hmo};\\
Y_{n}\textnormal{ converges to }Y\textnormal{ weakly in }\mathbb{L}^2\pr{\pp{0,T};\Lo{2}};\\
\tilde{f}\pr{\cdot, Y_n(\cdot)+W_g(\cdot)}\textnormal{ converges to some }F\textnormal{ weakly in }\mathbb{L}^2\pr{\pp{0,T};\Hmo};\\
\beta\pr{Y_{n}+W_g}\textnormal{ converges to some }Z\textnormal{ weakly in }\mathbb{L}^2\pr{\pp{0,T};\Lo{2}}.
\end{cases}
\end{equation}
{\color{black} The latter weak convergence follows from the upper bounds of $\norm{Y_n}_{\mathbb{L}^2(\mathcal{O})}$, together with the Lipschitz property of $\beta$.}
Similarly, whenever $n,m\geq 1$, one has (assuming, without loss of generality, $m\geq n$), 
\begin{equation}\label{Estim2.0}\begin{split}
&\norm{Y_{n}(t)-Y_{m}(t)}_{\Hmo}^2\\&+2\Re\int_0^t\scal{\Pi_m\beta\pr{Y_{n}(s)+\Pi_mW_g(s)}-\Pi_m\beta\pr{Y_{m}(s)+\Pi_mW_g(s)},Y_{n}(s)-Y_{m}(s)}_{\Lo{2}} ds\\=&2\Re\int_0^t\scal{\Pi_n\tilde{f}\pr{s,Y_{n}(s)+\Pi_nW_g(s)}-\Pi_m\tilde{f}\pr{s,Y_{m}(s)+\Pi_mW_g(s)},Y_{n}(s)-Y_{m}(s)}_{\Hmo}ds\\&+2\Re\int_0^t\scal{(\Pi_m\beta\pr{Y_{n}(s)+\Pi_mW_g(s)}-\Pi_n\beta\pr{Y_{n}(s)+\Pi_nW_g(s)},Y_{n}(s)-Y_{m}(s)}_{\Lo{2}} ds\\
&+\norm{\Pi_nX_0-\Pi_mX_0}_{\Hmo}^2.
\end{split}\end{equation}
In order to simplify the understanding, let us specify how the different terms on the right-hand side are dealt with.
\begin{enumerate}
\item For the term containing $\tilde{f}$,  noting that $(\Pi_n-\Pi_m)Y_n=0$, and $(\Pi_n-\Pi_m)Y_m=(\Pi_n-I)Y_m$, we get
\begin{align*}
&\Re\scal{\Pi_n\tilde{f}\pr{s,Y_{n}(s)+\Pi_nW_g(s)}-\Pi_m\tilde{f}\pr{s,Y_{m}(s)+\Pi_mW_g(s)},Y_{n}(s)-Y_{m}(s)}_{\Hmo}\\
&=\Re\scal{(\Pi_n-\Pi_m)\tilde{f}\pr{s,Y_{n}(s)+W_g(s)},Y_{n}(s)-Y_{m}(s)}_{\Hmo}\\&
\quad +\Re\scal{(\Pi_n-\Pi_m)\pp{\tilde{f}\pr{s,Y_{n}(s)+\Pi_nW_g(s)}-\tilde{f}\pr{s,Y_{n}(s)+W_g(s)}},Y_{n}(s)-Y_{m}(s)}_{\Hmo}\\&\quad +\Re\scal{\Pi_m\pp{\tilde{f}\pr{s,Y_{n}(s)+\Pi_nW_g(s)}-\tilde{f}\pr{s,Y_{m}(s)+\Pi_mW_g(s)}},Y_{n}(s)-Y_{m}(s)}_{\Hmo}\\
&\leq -\Re\scal{(\Pi_n-I)\tilde{f}\pr{s,Y_{n}(s)+W_g(s)},Y_{m}(s)}_{\Hmo}\\&\quad +C\pr{\norm{Y_{n}(s)-Y_{m}(s)}_{\Hmo}^2+ \norm{W_g(s)-\Pi_mW_g(s)}_{\Hmo}^2+\norm{W_g(s)-\Pi_nW_g(s)}_{\Hmo}^2}.
\end{align*}
\item For the term containing $\beta$, we use a similar argument combined with the $\pp{\beta}_1$-Lipschitz property of $\beta$, to get
\begin{align*}
&\Re\scal{(\Pi_m\beta\pr{Y_{n}(s)+\Pi_mW_g(s)}-\Pi_n\beta\pr{Y_{n}(s)+\Pi_nW_g(s)},Y_{n}(s)-Y_{m}(s)}_{\Lo{2}}\\
&\leq \Re\scal{(\Pi_m\beta\pr{Y_{n}(s)+W_g(s)}-\Pi_n\beta\pr{Y_{n}(s)+W_g(s)},Y_{n}(s)-Y_{m}(s)}_{\Lo{2}}\\
&\quad +\pp{\beta}_1\pr{\norm{W_g(s)-\Pi_nW_g(s)}_{\Lo{2}}+\norm{W_g(s)-\Pi_mW_g(s)}_{\Lo{2}}}\norm{Y_n(s)-Y_m(s)}_{\Lo{2}}\\
&\leq -\Re\scal{(I-\Pi_n)\beta\pr{Y_{n}(s)+W_g(s)},Y_{m}(s)}_{\Lo{2}}\\
&\quad +C\pr{\norm{W_g(s)-\Pi_nW_g(s)}_{\Lo{2}}^2+\norm{W_g(s)-\Pi_mW_g(s)}_{\Lo{2}}^2}+\frac{1}{2}\norm{Y_n(s)-Y_m(s)}_{\Lo{2}}^2\\
\end{align*}
\end{enumerate}
Combining these estimates yields 
\begin{align*}
&\norm{Y_{n}(t)-Y_{m}(t)}_{\Hmo}^2\\&+2\Re\int_0^t\scal{\beta\pr{Y_{n}(s)+\Pi_mW_g(s)}-\beta\pr{Y_{m}(s)+\Pi_mW_g(s)},Y_{n}(s)-Y_{m}(s)}^2_{\Lo{2}} ds\\\leq &
C\int_0^t\norm{Y_{n}(s)-Y_{m}(s)}_{\Hmo}^2ds+\int_0^t\norm{Y_{n}(s)-Y_{m}(s)}_{\Lo{2}}^2\\
&\quad -2\Re\int_0^t\scal{(\Pi_n-I)\tilde{f}\pr{s,Y_{n}(s)+W_g(s)},Y_{m}(s)}_{\Hmo}\ ds\\
&\quad -2\Re\int_0^t\scal{(I-\Pi_n)\beta\pr{Y_{n}(s)+W_g(s)},Y_{m}(s)}_{\Lo{2}}\ ds\\&\quad +C\int_0^t\pr{\norm{W_g(s)-\Pi_nW_g(s)}_{\Lo{2}}^2+\norm{W_g(s)-\Pi_mW_g(s)}_{\Lo{2}}^2}\ ds+\norm{(\Pi_n-\Pi_m)X_0}_{\Hmo}^2.
\end{align*}
Using Gronwall's inequality, 
\begin{equation}\label{Estim3.0}\begin{split}
&\norm{Y_{n}(t)-Y_{m}(t)}_{\Hmo}^2+\int_0^t\norm{Y_{n}(s)-Y_{m}(s)}^2_{\Lo{2}} ds\\&\leq \int_0^T\mathbf{1}_{s\leq t}e^{Ct}\alpha_{n,m}(s)ds\\&+(C+1)e^{CT}\Big\{\int_0^T\pr{\norm{W_g(s)-\Pi_nW_g(s)}_{\Lo{2}}^2+\norm{W_g(s)-\Pi_mW_g(s)}_{\Lo{2}}^2}\ ds\\&\quad\quad\quad\quad+\norm{(\Pi_n-\Pi_m)X_0}_{\Hmo}^2\Big\},
\end{split}
\end{equation}where 
\begin{equation}\label{Alpha}
\begin{cases}
\alpha_{n,m}(s)=&-2e^{-Cs}\Re\scal{(1-\Pi_n)\beta\pr{Y_{n}(s)+W_g(s)},Y_{m}(s)}_{\Lo{2}}\\
&-2e^{-Cs}\Re\scal{(\Pi_n-I)\tilde{f}\pr{s,Y_{n}(s)+W_g(s)},Y_{m}(s)}_{\Hmo};\\
\alpha_{n}(s)=&-2e^{-Cs}\Re\scal{(1-\Pi_n)\beta\pr{Y_{n}(s)+W_g(s)},Y(s)}_{\Lo{2}}\\
&-2e^{-Cs}\Re\scal{(\Pi_n-I)\tilde{f}\pr{s,Y_{n}(s)+W_g(s)},Y(s)}_{\Hmo};\\
\end{cases}
\end{equation} 
The remaining term on the right hand-side is denoted by $\eta_{n,m}$ and satisfies $\lim_{m\rightarrow\infty}\eta_{n,m}=\eta_n$ and $\lim_{n\rightarrow\infty}\eta_n=0$.
First, let us take $t=T$ in \eqref{Estim3.0} and note that, for $n$ fixed,  by the weak convergence of $Y_{m}$ in $\mathbb{L}^2\pr{\pp{0,T};\Lo{2}}$ and in $\mathbb{L}^2\pr{\pp{0,T};\Hmo}$, 
\[\lim_{m\rightarrow\infty} \int_0^Te^{CT}\alpha_{n,m}(s)ds=\int_0^Te^{CT}\alpha_{n}(s)ds+\eta_n.\]
Fatou's Lemma applied to \eqref{Estim3.0} with $t=T$ yields
\begin{equation*}
\int_0^T\norm{Y_{n}(s)-Y(s)}^2_{\Lo{2}} ds\leq \int_0^T\mathbf{1}_{s\leq t}e^{Ct}\alpha_{n}(s)ds+\eta_n.
\end{equation*}
Let us now specify that \begin{align*}
&\abs{e^{-Cs}\Re\scal{(\Pi_n-I)\tilde{f}\pr{s,Y_{n}(s)+W_g(s)},Y(s)}_{\Hmo}}\\=&\abs{e^{-Cs}\Re\scal{\tilde{f}\pr{s,Y_{n}(s)+W_g(s)},(\Pi_n-I)Y(s)}_{\Hmo}}\\\leq &e^{-Cs}\norm{\tilde{f}\pr{s,Y_{n}(s)+W_g(s)}}_{\Hmo}\norm{(\Pi_n-I)Y(s)}_{\Lo{2}}.
\end{align*}
Owing to \eqref{Estim1} (iii) and to the fact that $(1-\Pi_n)Y$ converges strongly to $0$ in $\mathbb{L}^2\pr{\pp{0,T};\Lo{2}}$, combined with the previous remark allowing to deal with the term involving $\Hmo$, one deduces that $\int_0^T\mathbf{1}_{s\leq t}e^{Ct}\alpha_{n}(s)ds$ converges to $0$ as $n\rightarrow\infty$, which shows that \[Y_{n}\textnormal{ converges strongly in }\mathbb{L}^2\pr{\pp{0,T};\Lo{2}}\textnormal{ to }Y. \] 
As a consequence of the Lipschitz-property of $\beta$,  $Z=\beta\pr{Y+W_g}$ and, as a consequence of the Lipschitz property of $\tilde{f}$, $F=f(\cdot,Y(\cdot)+W_g(\cdot),\mu(\cdot))$. {For our readers' sake, we emphasize that this equally implies that $-\Delta Z$ can be identified with $-\Delta\beta\pr{X}$ as elements in $\mathbb{L}^2\pr{\pp{0,T};\pr{\Lo{2}}^*}$.}

Furthermore,  it follows that $\alpha_{n,m}$ converges in $\mathbb{L}^1\pr{\pp{0,T};\mathbb{R}}$ to $0$ implying that $Y$ is also the $C\pr{\pp{0,T};\Hmo}$-limit of $Y_{n}$ (as $n\rightarrow\infty$). {By writing down the limiting integral equality with respect to $\pr{\Lo{2}}^*$, one has the last condition in Definition \ref{DefSol}. 

It then follows that $$\int_0^t\Delta\beta(Y(s)+W_g(s))ds\in \pr{H_0^1}^*\pr{\mathcal{O}},$$Lebesgue-almost surely on $\pp{0,T}$, which in turn implies the second condition in Definition \ref{DefSol}.}

The uniqueness of the solution is obtained directly from Gronwall's inequality, by using the monotonicity and the Lipschitz property of the function $f$.\\
Finally, let us mention that \eqref{Estim1} implies that $X=Y+W_g$ belongs to $$\mathbb{L}^2\pr{\Omega;C\pr{\pp{0,T};\Hmo}}\cap\mathbb{L}^2\pr{\Omega\times \pp{0,T};\Lo{2}}.$$
It is then clear that $\mathbb{P}_X(t)$ is a probability measure belonging to the Wasserstein space $\mathcal{P}_2\pr{\Hmo}$ and whose support belongs to $\mathbb{L}^2\pr{\pp{0,T};\Lo{2}}$.

For the next step, let $Y_1$ and $Y_2$ be two solutions associated to $\mu$ and $\nu$, and by $X_j=Y_j+W_g$ for $j\in\set{1,2}$. Then, employing, as we have already done before, the differential formula, with $c=1+4\max\set{\pp{f}_1,2}$, 
\begin{align*}
e^{-ct}\norm{Y_1(t)-Y_2(t)}_{\Hmo}^2 ds\leq &(2\pp{f}_1-c)\int_0^te^{-cs}\norm{Y_1(s)-Y_2(s)}_{\Hmo}^2\ ds\\&+2\pp{f}_1\int_0^te^{-cs}W_2\pr{\mu(s),\nu(s)}^2\ ds.
\end{align*}
It follows that
\begin{align*}
\int_0^te^{-cs}\norm{Y_1(s)-Y_2(s)}_{\Hmo}^2\ ds\leq\frac{2\pp{f}_1}{c-2\pp{f}_1}\int_0^te^{-cs}W_2\pr{\mu(s),\nu(s)}^2\ ds.
\end{align*}
This inequality is provided $\omega$-wise for almost all $\omega\in \Omega$ and, by taking expectations, it follows that 
\begin{align*}
d_{\mathbb{L}^2\pr{\pp{0,T},e^{-ct}dt;\mathcal{P}_2\pr{\Hmo}}}\pr{\mathbb{P}_{Y_1(\cdot)}\mathbb{P}_{Y_2(\cdot)}}\leq \frac{2\pp{f}_1}{c-2\pp{f}_1}d_{\mathbb{L}^2\pr{\pp{0,T}, e^{-ct}dt;\mathcal{P}_2\pr{\Hmo}}}\pr{\mu(\cdot),\nu(\cdot)},
\end{align*} 
providing a contraction with the choice of $c$ specified before. Our result is now complete due to classical fixed point arguments.
\end{proof}

\begin{remark}\label{RemVarSol}
Note that when $g=0$ and $f$ only depends on the solution, but not on the law, the previous solution can be seen as strong in $\pr{H_0^1}^*\pr{\mathcal{O}}$  and therefore it can be considered also in the form 
$$X(t)=X_0+\Delta\int_0^t\beta\pr{X(s)}ds + \int_0^t f\pr{X(s)}ds, \  \  \   \forall t\in\pp{0,T},$$
which corresponds to a variational formulation in $\pr{H_0^1}^*\pr{\mathcal{O}}$.  
From the monotonicity of the operator $\beta$ we can directly obtain uniqueness also for the variational formulation.  \\
This form will appear in the following optimal control formulation, since the necessary Fitzpatrick function characterizes, in our complex context,  functions $\beta$ which are merely monotone, while the strong formulation, even for real cases, usually holds only in the strictly monotone framework.
\end{remark}

\section{Control Interpretation}
In this section, we set aside the noise perturbation, i.e., we consider $g=0$ and the drift is no longer required to have a mean-field dependency. As the readers surely understand, it is crucial in the previous proof to require the strict monotonicity of $\beta$. In the real framework, this requirement can be circumvented essentially through two methods. 
\begin{enumerate}
\item On one hand, monotone Lipschitz (real) $\beta$ provide strictly monotone $\beta^{-1}$. This fails to hold in the complex case as seen by considering $\beta(z)=iz$, for which $\beta^{-1}(z)=-iz$ is monotone, but fails to be strictly monotone.
\item On the other hand, in the real case, subdifferential representations can be employed and these allow generalizations to monotone, albeit not strictly monotone, $\beta$. 
\end{enumerate}
This section concentrates on this latter method in order to show that as such it does not hold representation results in the complex setting, but generalizations can be considered. It is for this reason that we choose to concentrate on deterministic systems and set aside the terms arising from stochastic considerations.
\subsection{Some Elements of Representation for Maximal Monotonic Operators}

We consider a real Banach space $\mathbb{V}$ and we recall that whenever $\Phi:\mathbb{V}\longrightarrow \mathbb{R}\cup\set{+\infty}$ is convex and lower semicontinuous (l.s.c.),  then
\[\partial \Phi(x):=\set{p\in \mathbb{V}^*\ :\ \Phi(y)\geq \Phi(x)+\scal{p,y-x}_{\mathbb{V}^*,\mathbb{V}},\ \forall y\in \mathbb{V}},\]denotes the \emph{subdifferential} of $\Phi$ at $x\in \mathbb{V}$ .\\

In the real case, the porous media equation can be seen as a control problem using the fact that $\beta$, when strictly monotone (hence $-\Delta \beta$ maximal monotone on $\pr{\mathbb{L}^2\pr{\mathcal{O};\mathbb{R}},\pp{\mathbb{L}^2\pr{\mathcal{O};\mathbb{R}}}^*}$) can be identified with the subdifferential of some $\Phi$ and $\Phi(x)+\Phi^*(p)=\scal{p,x}_{\mathbb{R}}$ only when $p\in\partial \Phi(x)$. \\

As for the definition of monotonicity,  the one of subdifferentials can be extended to complex Banach spaces by following the spirit of the Remark \ref{RemFullRangeandRiesz}.

\begin{definition}
If $\mathbb{V}$ is a complex Banach space and $\Phi:\mathbb{V}\longrightarrow \mathbb{R}\cup\set{+\infty}$ is convex and lower semicontinuous, \[\partial \Phi(x):=\set{p\in \mathbb{V}^*\ :\ \Phi(y)\geq \Phi(x)+\Re\scal{p,y-x}_{\mathbb{V}^*,\mathbb{V}},\ \forall y\in \mathbb{V}},\]denotes the \emph{subdifferential} of $\Phi$ at $x\in \mathbb{V}$.
\end{definition}

\begin{example}Now, the simplest example one has in mind is the Schrödinger operator corresponding, up to a non-negative constant to $z\mapsto iz$. 
In this framework, if we want to represent $\beta=\partial\Phi$ as given by the previous definitions, this leads to \begin{align*}
\Phi(y)- \Phi(x)\geq \Re\scal{ix,y-x}.
\end{align*}
Taking (the upper limit as) $x\rightarrow 0$ leads to $\Phi(y)\geq \limsup_{z\rightarrow 0}\Phi(z)$. Let us consider $y:=rx$, with $r\in\mathbb{R}$. The previous inequality reads \[\Phi(x)\leq \Phi(rx)-\abs{x}^2\Re\scal{i,r-1}=\Phi(rx),\]
and taking $r\rightarrow 0$ leads to $\limsup_{z\rightarrow 0}\Phi(z)\geq \Phi(x)$ which implies $\Phi$ is constant and this leads to a contradiction. It follows that even for the simplest cases one has to find a cleverer way to "represent" $\beta$.
\end{example}

In the complex case,  given a monotone $\beta$, we define,  inspired by \cite{Fitz_1988} (see also \cite{SZ_2004}),
\begin{equation}\label{Fitz}
\mathcal{F}_\beta(z_1,z_2):=\Re{\scal{z_1,z_2}_\mathbb{C}}-\inf_{u\in \mathbb{C}}\Re{\scal{z_1-u,z_2-\beta(u)}_\mathbb{C}}
\end{equation}

We shall use in the control interpretation of the problem the following properties of the Fitzpatrick function.

\begin{proposition}\label{PropFitzpatrick}Let $\beta$ be a monotone function. The following properties hold true.
\begin{enumerate}
\item $\mathcal{F}_\beta:\mathbb{C}^2\longrightarrow\mathbb{R}\cup\set{+\infty}$ is a proper lower semi-continuous convex function.
\item For all $z_1,z_2\in\mathbb{C}$,  $\mathcal{F}_\beta(z_1,z_2)\geq\Re\scal{z_1,z_2}.$
\item For all $z_1\in\mathbb{C}$, $\beta(z_1)$ is the unique $z_2\in \mathbb
{C}$ such that $\mathcal{F}_\beta(z_1,z_2)=\Re\scal{z_1,z_2}_\mathbb{C}$.
\end{enumerate}
\end{proposition}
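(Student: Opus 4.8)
The plan is to reduce all three assertions to the single observation that $\mathcal{F}_\beta$ is a supremum of affine functions. Expanding the Hermitian product $\scal{z_1-u,z_2-\beta(u)}_\mathbb{C}$ and taking real parts, the constant term $\Re\scal{z_1,z_2}_\mathbb{C}$ cancels and \eqref{Fitz} rewrites as
\[
\mathcal{F}_\beta(z_1,z_2)=\sup_{u\in\mathbb{C}}\pr{\Re\scal{z_1,\beta(u)}_\mathbb{C}+\Re\scal{u,z_2}_\mathbb{C}-\Re\scal{u,\beta(u)}_\mathbb{C}}.
\]
For each fixed $u$ the map $(z_1,z_2)\mapsto\Re\scal{z_1,\beta(u)}_\mathbb{C}+\Re\scal{u,z_2}_\mathbb{C}-\Re\scal{u,\beta(u)}_\mathbb{C}$ is real‑affine and continuous on $\mathbb{C}^2$, hence convex and lower semicontinuous; a pointwise supremum over the nonempty index set $\mathbb{C}$ of such functions is again convex and lower semicontinuous and is nowhere $-\infty$. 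To finish item (1) (properness) I would exhibit one finite value: evaluating at the point $(z_1,\beta(z_1))$ and invoking the monotonicity of $\beta$, namely $\Re\scal{z_1-u,\beta(z_1)-\beta(u)}_\mathbb{C}\geq 0$ for all $u$ with equality at $u=z_1$, shows that the infimum in \eqref{Fitz} equals $0$ there, so $\mathcal{F}_\beta(z_1,\beta(z_1))=\Re\scal{z_1,\beta(z_1)}_\mathbb{C}<\infty$.

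Item (2) falls out of the supremum representation by keeping only the term $u=z_1$, which contributes exactly $\Re\scal{z_1,z_2}_\mathbb{C}$; equivalently the infimum in \eqref{Fitz} is $\leq 0$ since $u=z_1$ makes the argument vanish. For item (3), existence of a point of equality is the computation already used in item (1): monotonicity gives $\inf_{u\in\mathbb{C}}\Re\scal{z_1-u,\beta(z_1)-\beta(u)}_\mathbb{C}=0$, whence $\mathcal{F}_\beta(z_1,\beta(z_1))=\Re\scal{z_1,\beta(z_1)}_\mathbb{C}$. For uniqueness, I would assume $\mathcal{F}_\beta(z_1,z_2)=\Re\scal{z_1,z_2}_\mathbb{C}$; together with item (2) this forces $\Re\scal{z_1-u,z_2-\beta(u)}_\mathbb{C}\geq 0$ for every $u\in\mathbb{C}$. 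Then I would apply Minty's device: substituting $u=z_1+tw$ for $t\in\mathbb{R}\setminus\{0\}$ and arbitrary $w\in\mathbb{C}$, dividing by $t$, and letting $t\to 0^{+}$ and $t\to 0^{-}$ while using the continuity of $\beta$ (available since $\beta$ is Lipschitz, resp.\ differentiable, under the standing hypotheses) yields $\Re\scal{w,z_2-\beta(z_1)}_\mathbb{C}=0$ for all $w\in\mathbb{C}$; taking $w=z_2-\beta(z_1)$ gives $z_2=\beta(z_1)$.

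The expansion, the affineness/lower semicontinuity bookkeeping, and the $u=z_1$ substitutions are routine; the one delicate point is the uniqueness half of item (3), where one must upgrade ``$(z_1,z_2)$ is monotonically related to the whole graph of $\beta$'' to ``$(z_1,z_2)$ lies on that graph''. This is precisely where the (hemi)continuity of $\beta$ is genuinely needed — without it the Fitzpatrick condition only characterizes $(z_1,z_2)$ as a point of the maximal monotone extension of $\beta$. I expect this to be the only real obstacle, and I would emphasize explicitly that plain monotonicity together with continuity suffices, strict monotonicity being irrelevant here, which is exactly the feature that makes $\mathcal{F}_\beta$ useful for the control interpretation that follows.
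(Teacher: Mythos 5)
Your proof is correct, and for assertion (1) it coincides with the paper's: both of you rewrite $\mathcal{F}_\beta$ as a supremum of real-affine functions of $(z_1,z_2)$, which immediately gives convexity and lower semicontinuity (your extra step of exhibiting the finite value $\mathcal{F}_\beta(z_1,\beta(z_1))=\Re\scal{z_1,\beta(z_1)}_{\mathbb{C}}$ to settle properness is a welcome precision). For assertions (2) and (3) the routes diverge: the paper identifies $\beta$ with the vector field $\bigl(\Re\beta,\Im\beta\bigr)$ on $\mathbb{R}^2$, invokes the fact that a continuous monotone map on $\mathbb{R}^2$ is maximal monotone, and then cites the Fitzpatrick--Simons--Z\u{a}linescu properties of $\mathcal{F}$ for maximal monotone operators; you instead prove both assertions directly, getting (2) by retaining the single index $u=z_1$ in the supremum and the uniqueness half of (3) by Minty's device ($u=z_1+tw$, $t\to 0^{\pm}$, continuity of $\beta$). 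The two arguments rest on the same hypothesis — continuity of $\beta$, which the statement omits but the paper also uses tacitly ("Since $\beta$ is continuous and monotone\dots") and which is harmless given the standing Lipschitz assumptions — but yours is self-contained, makes visible exactly where continuity enters (only in the uniqueness step, to pass from "monotonically related to the graph" to "on the graph"), and avoids the external reference; the paper's reduction is shorter and places the result in the general Fitzpatrick framework, which it then reuses in Remark \ref{RemBE} for the operator $-\Delta\beta$. Your closing observation that strict monotonicity plays no role here is accurate and is indeed the point the paper exploits in the control interpretation.
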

\begin{proof}[Proof of Proposition \ref{PropFitzpatrick}]
The reader is invited to note that \[\mathcal{F}_\beta(z_1,z_2)=\sup_{u\in\mathbb{C}}\Re \pp{\scal{\begin{pmatrix}
z_1\\z_2
\end{pmatrix},\begin{pmatrix}
\beta(u)\\u
\end{pmatrix}}_{\mathbb{C}^2}-\scal{u,\beta(u)}_{\mathbb{C}}},\](i.e.  a supremum over a family of linear functions), thus providing a real-valued lower semi-continuous convex function on $\mathbb{C}^2$. \\
Since $\beta$ is continuous and monotone,  $\begin{pmatrix}
\Re\beta\\\Im\beta
\end{pmatrix}:\mathbb{R}^2\longrightarrow\mathbb{R}^2$ is a maximal monotone operator, where, by abuse of notation, we have identified $\Re \beta(z)=\Re \beta\pr{\begin{pmatrix}
\Re z\\\Im z
\end{pmatrix}}$ when $z\in\mathbb{C}$ (and similarly for $\Im\beta$). By definition, $\mathcal{F}_\beta$ can be identified with the function $g:\mathbb{R}^2\times\mathbb{R}^2\longrightarrow\mathbb{R}\cup\set{+\infty}$ given by
\begin{align*}
g\pr{\Re z_1,\Im z_1,\Re z_2,\Im z_2}:=\scal{\begin{pmatrix}
\Re z_1\\\Im z_1
\end{pmatrix},\begin{pmatrix}
\Re z_2\\\Im z_2
\end{pmatrix}}_\mathbb{C}-inf_{u\in\mathbb{R}^2}\scal{\begin{pmatrix}
\Re z_1\\\Im z_1
\end{pmatrix}-u,\begin{pmatrix}
\Re z_2\\\Im z_2
\end{pmatrix}-\begin{pmatrix}
\Re\beta\\\Im\beta
\end{pmatrix}(u)
}_\mathbb{C}.
\end{align*}\\
This corresponds to the Fitzpatrick function associated to $\begin{pmatrix}
\Re\beta\\\Im\beta
\end{pmatrix}$ (see \cite[Eq. (1.2)]{SZ_2004}) and the assertions are merely re-interpretations of \cite[Eq. (1.3)]{SZ_2004} for the case of Hilbert spaces $\mathbb{R}^2$ (where the dual is identified with $\mathbb{R}^2$).
\end{proof}

In the spirit of the Brezis-Ekeland variational principle,  we can construct the following two optimal control problem which are equivalent to the existence result.

\subsection{A Variational Formulation}

In connection with the equation \eqref{ecc}, we formally define the following $u$-controlled dynamics
\begin{equation}\label{ecu}
dX(t)=\pr{\Delta u(t)+f(X(t))}dt.
\end{equation}
With respect to the aforementioned dynamics, we consider the control functional\begin{equation}
\label{Jfunc}
J(x,u):=\int_0^T\int_{\mathcal{O}} \pr{\mathcal{F}_\beta\pr{X^{x,u}(t),u(t)}-\Re\scal{X^{x,u}(t),u(t)}}d\xi dt.
\end{equation}
In the notion of solution, one seeks an integral expression of type \[X^{x,u}(t)=x+\Delta\int_0^t u(s)ds+\int_0^tf\pr{X^{x,u}(s)}ds,\]{with $u$ taking its values in $H_0^1$ and such that $\int_0^\cdot u(s)ds\in\mathbb{L}^\infty\pr{\pp{0,T};H_0^1}$ (please take a look at Definition \ref{DefSol}).}
The reader is invited to note the fact that, by the point 3 of the Proposition \ref{PropFitzpatrick},  an optimal pair $(X^*,u^*)$ of the previous problem which satisfies also $J(X^*,u^*)=0$ is a solution to \eqref{ecc}.

In order to ensure the well-posedness of the problem above,  we shall first write the following equivalent formulation 
\begin{equation}
\label{Jfunc}
J(X^{x,v},v):=\int_0^T\int_{\mathcal{O}} \pr{\mathcal{F}_\beta\pr{X^{x,v}(t),\partial_tv(t)}-\Re\scal{X^{x,v}(t),\partial_tv(t)}}d\xi dt,
\end{equation}
which is subject to 
\begin{equation}\label{eqv}X^{x,v}=x+\Delta v_t+\int_0^t f\pr{X^{x,v}(s)}ds,\end{equation}with a simple notation $v(t):=\int_0^t u(s)ds$. \\ 

\noindent The requirement on the control is now simplified, and amounts to $v\in \mathbb{L}^\infty\pr{\pp{0,T};H_0^1}$ which, impacts the solution $X^{x,u}$ with the requirement that $X^{x,u}\in\mathbb{L}^\infty\pr{\pp{0,T};\pr{H_0^1}^*}$. The Lipschitz requirement on $f$ with respect to $\pr{H_0^1}^*$ takes care of the remaining integral term in \eqref{eqv}. Furthermore, by a slight abuse of notation and in preparation of the precise statement of our problem, we no longer have a functional of the initial condition $x$, but rather of an element in $\mathbb{L}^\infty\pr{\pp{0,T};\pr{H_0^1}^*}$. \\

\noindent However, we still have to deal with the scalar product taken in $\Lo{2}$ and this amounts to imposing that $X^{x,v}\in \mathbb{L}^2\pr{\pp{0,T};\Lo{2}}$ respectively $v\in W^{1,2}\pr{\pp{0,T};\Lo{2}}$. \\
This equally gives us the consistency of the term involving Fitzpatrick's functional under the assumptions \ref{Ass1}. Indeed, since $\beta$ is assumed to be strictly monotone, 0 at 0, and Lipschitz,-continuous, it follows that  \[\abs{\beta(u)}\leq C\abs{u},\textnormal{ and }\mathcal{R}\scal{u,\beta(u)}\geq c\abs{u}^2, \] for some positive real constants $c,C$. This leads to \begin{align*}
\mathcal{R}\pr{\scal{u,z_2}+\scal{\beta(u),z_1}-\scal{u,\beta(u)}}&\leq \frac{c}{2}\abs{u}^2+\frac{1}{2c}\abs{z_2}^2+\frac{c}{2}\abs{u}^2+\frac{C^2}{2c}\abs{z_1}^2-c\abs{u}^2\\&=\frac{1}{2c}\abs{z_2}^2+\frac{C^2}{2c}\abs{z_1}^2.
\end{align*}As a consequence, $\mathcal{F}_\beta\pr{X^{x,u}(t),\partial_tv(t)}\in \mathbb{L}^1\pr{\pp{0,T}\times \mathcal{O};\mathbb{R}}$.\\
Finally, invoking the Lipschitz regularity of $f$, this time with respect to $\Lo{2}$, gives us the consistency in \eqref{Jfunc} in $\Lo{2}$.\\

\noindent As a consequence, one can concentrate on the following.

\begin{problem}\label{CtrlProb1}
\begin{align*}
\textnormal{Minimize }&J(y,v):=\int_0^T\int_{\mathcal{O}}\mathcal{F}_\beta\pr{y(t),\partial_tv(t)}+\Re\scal{v(t),f(y(t))}d\xi dt\\&+\frac{1}{2}\norm{v(T)}^2_{\Ho{}}-\Re\int_{\mathcal{O}}\scal{v(T),y_0+\int_0^Tf(y(s))ds}d\xi,\\
{\textnormal{ over }}&{y\in \mathbb{L}^2\pr{\pp{0,T};\Lo{2}}\cap \mathbb{L}^\infty\pr{\pp{0,T};\pr{H_0^1}^*},}\\ &{v\in W^{1,2}\pr{\pp{0,T};\Lo{2}}\cap \mathbb{L}^\infty\pr{\pp{0,T};H_0^1},}\\
\textnormal{subject to }&\Delta v(t)=y(t)-y_0-\int_0^tf(y(s))ds.
\end{align*}
\end{problem}
\begin{remark}To understand this, the reader is invited to note that if $y$ satisfies the state constraint, then, with standard integration by parts, one gets \begin{align*}
&\int_0^T\int_{\mathcal{O}}\Re\scal{\partial_tv(s),y(t)}d\xi dt\\&=\int_0^T\int_{\mathcal{O}}\Re\scal{\partial_tv(t),y_0+\int_0^tf(y(r))dr}d\xi dt+\int_{\mathcal{O}}\Re\scal{\partial_tv(t),\Delta v(t)}d\xi dt\\
&=\Re\int_{\mathcal{O}}\pp{\scal{v(T),y_0+\int_0^Tf(y(s))ds}-\int_0^T\scal{v(t),f(y(t))}dt}d\xi-\frac{1}{2}\norm{\nabla v(T)}^2_{\Lo{2}},
\end{align*}such that the cost functional is exactly
\[J(y,v):=\int_0^T\int_{\mathcal{O}}\mathcal{F}_\beta\pr{y(t),\partial_tv(t)}-\Re\scal{\partial_tv(t),y(t)}d\xi dt.\]
\end{remark}
{
As such, $J\geq 0$ (according to the second assertion in Proposition \ref{PropFitzpatrick}) and the $0$ value is attained for $\partial_tv(t)=\beta(y(t))$, that is for $y$ being the solution $X^{y_0,v}$ (which follows from the third assertion in Proposition \ref{PropFitzpatrick}). These consideration put together and by invoking Theorem \eqref{ThMain} give the following Brézis-Ekeland characterization.}

{\begin{proposition}\label{PropBE}
We ask Assumption \ref{Ass1} to hold true. Then, for every $y_0\in \Lo{2}$, the problem \ref{CtrlProb1} has a unique optimal solution $(y^*,v^*)$ such that \begin{enumerate}
\item[(i)] $y^*\in \mathbb{L}^2\pr{\pp{0,T};\Lo{2}}\cap \mathbb{L}^\infty\pr{\pp{0,T};\pr{H_0^1}^*}$;
\item[(ii)] $v^*\in W^{1,2}\pr{\pp{0,T};\Lo{2}}\cap \mathbb{L}^\infty\pr{\pp{0,T};H_0^1}$, and
\item[(iii)] $\Delta v^*(t)=y^*(t)-y_0-\int_0^tf(y^*(s))ds$ almost everywhere.
\end{enumerate}
Furthermore, \begin{enumerate}
\item[(iv)] $J(y^*,v^*)=0$ and
\item[(v)] $\partial_tv^*=\beta(y^*)$, and $y^*=X^{y_0,\beta(y^*)}$ is the unique solution to \eqref{ecc}\footnote{\color{black} Please note that $g=0$ and, in this case, \eqref{ecc} is actually a  deterministic equation.} starting from $X_0=y_0$.
\end{enumerate}
\end{proposition}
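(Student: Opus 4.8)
\textbf{Proof plan for Proposition \ref{PropBE}.}
The plan is to establish the result in two logically independent layers: first, that the control problem \ref{CtrlProb1} admits a genuine minimizer in the prescribed spaces, and second, that the minimum value is exactly $0$ and is attained precisely at the solution of \eqref{ecc}, which forces uniqueness of the optimal pair. For the second layer, most of the work has already been done in the discussion preceding the statement: by the integration-by-parts identity in the Remark, $J(y,v)$ coincides on the constraint set with $\int_0^T\int_{\mathcal{O}}\pr{\mathcal{F}_\beta\pr{y(t),\partial_tv(t)}-\Re\scal{\partial_tv(t),y(t)}}\,d\xi\,dt$, which is nonnegative pointwise by assertion (2) of Proposition \ref{PropFitzpatrick}. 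Hence $J\geq 0$ always, and $J(y,v)=0$ iff $\mathcal{F}_\beta(y(t,\xi),\partial_tv(t,\xi))=\Re\scal{y(t,\xi),\partial_tv(t,\xi)}$ for $dt\otimes d\xi$-a.e. $(t,\xi)$, which by assertion (3) of Proposition \ref{PropFitzpatrick} is equivalent to $\partial_tv=\beta(y)$ a.e. Feeding $\partial_tv=\beta(y)$ back into the constraint $\Delta v(t)=y(t)-y_0-\int_0^tf(y(s))ds$, i.e. $\Delta\int_0^t\beta(y(s))ds=y(t)-y_0-\int_0^tf(y(s))ds$, shows that $y$ is precisely a solution of the deterministic, noise-free, law-independent version of \eqref{ecc} in the variational sense of Remark \ref{RemVarSol}. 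Theorem \ref{ThMain} (with $g=0$ and $f$ law-free) then provides such a $y^*=X^{y_0,\beta(y^*)}$, uniquely; setting $v^*(t):=\int_0^t\beta(y^*(s))ds$ gives the candidate optimal pair, and items (i)--(v) follow by reading off the regularity statements already contained in Definition \ref{DefSol} and in the proof of Theorem \ref{ThMain} (the $\mathbb{L}^2(\pp{0,T};\Lo{2})$-bound on $y^*$ gives $v^*\in W^{1,2}(\pp{0,T};\Lo{2})$ via $\abs{\beta(u)}\le C\abs{u}$; the $\mathbb{L}^\infty(\pp{0,T};\Hmo)$-bound on $y^*$ together with the constraint gives $v^*\in\mathbb{L}^\infty(\pp{0,T};H_0^1)$; and $\beta(X^*(\cdot))\in\mathbb{L}^2(\pp{0,T};H_0^1)$ is exactly the second bullet of Definition \ref{DefSol}).

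For the first layer --- existence of \emph{some} minimizer --- I would in fact bypass an abstract compactness argument entirely and argue that the pair constructed above \emph{is} the minimizer: since $J\geq 0$ on the whole admissible set and $J(y^*,v^*)=0$, the pair $(y^*,v^*)$ is optimal by inspection. Uniqueness of the optimal pair then reduces to: any other admissible $(y,v)$ with $J(y,v)=0$ must satisfy $\partial_tv=\beta(y)$ a.e. and the constraint, hence $y$ solves the same variational equation; uniqueness in Theorem \ref{ThMain} (equivalently, the uniqueness asserted for the variational formulation in Remark \ref{RemVarSol}, which comes from monotonicity of $\beta$ and the Lipschitz property of $f$ via Gronwall) forces $y=y^*$, and then $\Delta v=\Delta v^*$ a.e. with $v,v^*\in\mathbb{L}^\infty(\pp{0,T};H_0^1)$ and $-\Delta$ an isomorphism on that space gives $v=v^*$. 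One should also double-check the admissibility of $(y^*,v^*)$: that $y^*$ genuinely lies in $\mathbb{L}^2(\pp{0,T};\Lo{2})\cap\mathbb{L}^\infty(\pp{0,T};\Hmo)$ and $v^*$ in $W^{1,2}(\pp{0,T};\Lo{2})\cap\mathbb{L}^\infty(\pp{0,T};H_0^1)$, which is precisely the content of the estimates \eqref{Estim1} and of $\beta(X(\cdot))\in\mathbb{L}^2(\pp{0,T};H_0^1)$ established in the proof of Theorem \ref{ThMain}, so nothing new is needed.

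The main subtlety --- the step I expect to require the most care --- is the consistency/measurability bookkeeping for the Fitzpatrick term: one must make sure that $\mathcal{F}_\beta(y(t,\xi),\partial_tv(t,\xi))$ is a well-defined element of $\mathbb{L}^1(\pp{0,T}\times\mathcal{O};\mathbb{R})$ for every admissible $(y,v)$, so that the functional $J$ is finite-valued on the admissible set and the equality ``$J=0\iff\partial_tv=\beta(y)$ a.e.'' is legitimate rather than a comparison of possibly-infinite quantities. This is exactly the quadratic bound $\mathcal{R}\pr{\scal{u,z_2}+\scal{\beta(u),z_1}-\scal{u,\beta(u)}}\le \frac{1}{2c}\abs{z_1}^2+\frac{C^2}{2c}\abs{z_2}^2$ derived just above the statement, combined with $y^*\in\mathbb{L}^2(\pp{0,T};\Lo{2})$ and $\partial_tv^*=\beta(y^*)\in\mathbb{L}^2(\pp{0,T};\Lo{2})$; I would make the measurability of $(t,\xi)\mapsto\mathcal{F}_\beta(y(t,\xi),\partial_tv(t,\xi))$ explicit using the representation of $\mathcal{F}_\beta$ as a countable supremum of continuous (affine-in-the-arguments) functions from the proof of Proposition \ref{PropFitzpatrick}. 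Everything else is a matter of invoking Theorem \ref{ThMain}, Proposition \ref{PropFitzpatrick}, and the integration-by-parts identity already in place.
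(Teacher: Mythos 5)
Your proposal matches the paper's argument: the paper proves this proposition exactly by combining the integration-by-parts identity (so that $J\geq 0$ via assertion 2 of Proposition \ref{PropFitzpatrick}), the characterization of the zero value via assertion 3 (forcing $\partial_t v=\beta(y)$), the well-posedness and uniqueness from Theorem \ref{ThMain} applied to the deterministic, law-free equation, and the integrability of the Fitzpatrick term established in the discussion preceding Problem \ref{CtrlProb1}. Your additional care about measurability of the Fitzpatrick integrand and the explicit recovery of $v$ from $\Delta v$ via the isomorphism are sound refinements of the same route, not a different one.
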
}
We end this subsection with some remarks that motivate the consideration of the control problem, besides the generalization of known variational principles.
\begin{remark}\label{RemBE}
\begin{enumerate}
\item The above formulation is also valid for the classic Schrödinger operator and, more general, for merely monotone $\beta$. Indeed, the solvability of the control problem \ref{CtrlProb1} together with a null optimal value as in point (iv) implies the existence of a solution to \eqref{ecc} in the distributional sense as pointed out in Remark \ref{RemVarSol}.
\item The Problem \ref{CtrlProb1} is classically formulated (semi-continuity, convexity, well-posedness of terms), the only aspect missing being the \emph{coercitivity}. However, it is known that Fitzpatrick's function $\mathcal{F}_\beta$ is not the unique one \emph{representing} the maximal monotone function $\beta$ (see, for instance \cite{Visintin2013}). This approach can equally provide a solution to special cases when $\beta$ is monotonic but not strictly monotonic; we recall that its maximal monotonicity is still guaranteed in $\mathbb{C}$.
\item Again the reference \cite[Section 5]{Visintin2013} formally deals with stability of equations and this can be extended to our porous-media complex setting.  However,  the classical regularizations of $\beta$ to guarantee strict monotonicity i.e. $\beta_\varepsilon:=\beta+\varepsilon \Re Id$ followed by the natural Fitzpatrick choice of representatives $\mathcal{F}_{\beta_{\varepsilon}}$ is not covered by the abstract assumptions related to $\Gamma$-convergence given as examples in \cite[Section 5]{Visintin2013}. This is not entirely surprising, partly due to the previous remark.
\item Again under the assumption of strict monotonicity on $\beta$, another choice of maximal operator (see Proposition \ref{PropMaxmonotone}) is $-A=-\Delta\beta$ on $\pr{V,V^*}:=\pr{\Lo{2},\pr{\Lo{2}}^*}$ (see the aforementioned result and discussions preceding it). In this case too, Fitzpatrick's function $\mathcal{F}_{-A}$ can be defined by setting \[\mathcal{F}_{-A}(x,x^*):=\sup_{(a,a^*)\in gr (-A)}\Re\pp{\scal{a,x^*}_{\pr{V,V^*}}+\scal{x,a^*,}_{\pr{V,V^*}}-\scal{a,a^*}_{\pr{V,V^*}}},\]where $gr$ denotes the graph (see \cite[Eq. (1.2)]{SZ_2004}). The properties are similar to the ones exhibited in Proposition \ref{PropFitzpatrick} and another variational problem equivalent to the consistency of \eqref{ecc} can be formulated.
\end{enumerate}
\end{remark}

\subsection*{Submission statement}
\noindent The work presented here has not been published previously,  it is not under consideration for publication elsewhere.\\
The publication is approved by all authors and by the responsible authorities where the work was carried out. If accepted, it will not be published elsewhere in the same form, in English or in any other language, including electronically without the written consent of the copyright-holder.
\subsection*{Declaration of interest} 
The authors have no competing interest to declare.
\subsection*{Declaration of generative AI in scientific writing}
The paper makes no use of generative AI.
\subsection*{Author contributions}
\textbf{Ioana Ciotir}: Formal analysis; Funding acquisition; Investigation; Methodology; Supervision; Writing-original draft;\\
\textbf{Dan Goreac}: Formal analysis; Funding acquisition; Investigation; Methodology; Supervision; Writing-original draft;\\
\textbf{Juan Li}: Formal analysis; Funding acquisition; Investigation; Methodology; Supervision; Writing-original draft;\\
\textbf{Xinru Zhang}: Formal analysis; Investigation; Methodology; Writing-original draft.

\subsection*{Data Availability Statement}
\noindent No new data were created or analyzed in this study. Data sharing is not applicable to this article.
\subsection*{Acknowledgment}
Part of this work was completed while Ioana Ciotir occupied a visiting position at Shandong University Weihai; she wishes to thank the School of Mathematics and Statistics for their hospitality.\\
Dan Goreac, Juan Li and Xinru Zhang have been partially supported by the NSF of Shandong Province (NO. ZR202306020015), National Key R and D Program of China (NO. 2018YFA0703900), and the NSF of P.R. China (NO. 12031009).

\bibliographystyle{abbrv}
\bibliography{bibl_2024_fin}
\end{document}